  \def\MR#1{}
\long\def\@savemarbox#1#2{\global\setbox#1\vtop{\hsize\marginparwidth 
  \@parboxrestore\tiny\raggedright #2}}
\renewcommand*{\backref}[1]{}
\renewcommand*{\backrefalt}[4]{
   \ifcase #1
   [No citations.]
   \or [#2]
   \else [#2]
   \fi }
\numberwithin{equation}{section}
\theoremstyle{plain}
\newtheorem{theorem}[equation]{Theorem}
\newtheorem{corollary}[equation]{Corollary}
\newtheorem{lemma}[equation]{Lemma}
\newtheorem{proposition}[equation]{Proposition}
\newtheorem*{namedtheorem}{\theoremname}
\newcommand{\theoremname}{testing}
\theoremstyle{definition}
\newtheorem{definition}[equation]{Definition}
\newtheorem{remark}[equation]{Remark}
\newtheorem{question}[equation]{Question}
\newcommand{\refthm}[1]{Theorem~\ref{Thm:#1}}
\newcommand{\reflem}[1]{Lemma~\ref{Lem:#1}}
\newcommand{\refprop}[1]{Proposition~\ref{Prop:#1}}
\newcommand{\refcor}[1]{Corollary~\ref{Cor:#1}}
\newcommand{\refsec}[1]{Section~\ref{Sec:#1}}
\newcommand{\reffig}[1]{Figure~\ref{Fig:#1}}
\newcommand{\HH}{{\mathbb{H}}}
\newcommand{\RR}{{\mathbb{R}}}
\newcommand{\NN}{{\mathbb{N}}}
\newcommand{\from}{\colon} 
\newcommand{\bdy}{\partial}
\renewcommand{\setminus}{\smallsetminus}
\newcommand{\vol}{\operatorname{vol}}
\newcommand{\CV}{\operatorname{CV}}
\newcommand{\vtet}{{v_{\rm tet}}}
\newcommand{\voct}{{v_{\rm oct}}}
\title{Alternating links on surfaces and volume bounds}
\author{Efstratia Kalfagianni}
\author{Jessica S.~Purcell}
\address[]{Department of Mathematics, Michigan State University, East
Lansing, MI, 48824, USA}
\email[]{kalfagia@math.msu.edu}
\address[]{School of Mathematics, Monash University, VIC 3800, Australia }
\email[]{jessica.purcell@monash.edu}
\subjclass[2010]{57M25, 57M27, 57M50}
\keywords{hyperbolic knot, hyperbolic link, volume, slope length, cusp shape, Dehn filling}
\begin{document}

\begin{abstract}
Weakly generalised alternating knots are knots with an alternating projection onto a closed surface in a compact irreducible 3-manifold, and they share many hyperbolic geometric properties with usual alternating knots. For example, usual alternating knots have volume bounded above and below by the twist number of the alternating diagram due to Lackenby. Howie and Purcell showed that a similar lower bound holds for weakly generalised alternating knots. In this paper, we show that a generalisation of the upper volume bound does not hold, by producing a family of weakly generalised alternating knots in the 3-sphere with fixed twist number but unbounded volumes. As a corollary, generalised alternating knots can have arbitrarily small cusp density, in contrast with usual alternating knots whose cusp densities are bounded away from zero due to Lackenby and Purcell. On the other hand, we show that the twist number of a weakly generalised alternating projection does give two sided linear bounds on volume inside a thickened surface; we state some related open questions.
\end{abstract}

\maketitle

\section{Introduction}\label{Sec:Intro}

Alternating knots have a diagram that alternates on the projection plane $S^2$ in $S^3$. There have been several generalisations of alternating knots to project to other surfaces, such as toroidally alternating links of Adams~\cite{Adams:ToroidallyAlt}, generalised alternating links of Ozawa~\cite{Ozawa1}, and $F$-alternating links of Hayashi~\cite{Hayashi}. In his PhD thesis, Howie introduced a family of knots and links which also have diagrams that alternate on surfaces other than the usual projection plane, but without some of the restrictions of previous work. He called these knots and links \emph{weakly generalised alternating}, to distinguish them from Ozawa's links, and proved that the conditions he requires on the diagram are sufficient to guarantee that such knots are prime, nonsplit, with essential checkerboard surfaces~\cite{Howiethesis}. See also \cite[page~5]{HowiePurcell} for some discussion of the differences between these links and others. 

Recently, Howie and Purcell extended the definition of weakly generalised alternating links to include alternating links on surfaces embedded in any compact orientable 3-manifold, not just $S^3$~\cite{HowiePurcell}. They showed that such links have many geometric properties in common with usual alternating links in $S^3$. For example, Menasco was the first to give conditions on usual alternating links that guarantee they are hyperbolic~\cite{Menasco:Alternating}. These results were extended by several authors, such as by Adams~\cite{Adams:ToroidallyAlt}, Hayashi~\cite{Hayashi}, and Ozawa~\cite{Ozawa1} for alternating links on surfaces other than $S^2$ in $S^3$, and by Adams \emph{et al}~\cite{Adamsetal:hyperbolicity}, and Champanerkar, Kofman, and Purcell~\cite{CKP:Biperiodic} for alternating links in $T^2\times\RR$. In \cite{HowiePurcell}, Howie and Purcell give broad conditions that guarantee hyperbolicity in many additional cases, subsuming many of the previous results. They also consider geometry of surfaces embedded in such links, and link volumes.

In this paper, we focus again on volumes. The volume of a usual alternating link on $S^2$ in $S^3$ is bounded above and below by linear functions of the twist number of the link, due initially to work of Lackenby~\cite{lackenby:alt-volume}, with the lower bound improved by Agol, Storm, and W.~Thurston~\cite{AST}, and the upper bound improved by Agol and D.~Thurston~\cite[Appendix]{lackenby:alt-volume}. Note that the upper bound applies to any non-split link in $S^3$, not just alternating.

The problem of determining hyperbolic volumes of link complements has a long history.
In addition to bounds on volumes of alternating links, there are known bounds or improved bounds on volumes of other families, such as highly twisted links \cite{fkp:filling, Purcell:VolHighlyTwisted}, 2-bridge links \cite{GueritaudFuter:2bridge}, links with certain symmetries \cite{FKP:Mrl}, links with high amounts of generalised twisting \cite{Purcell:multiplytwisted, fkp:coils, Adams:GenAugAlt}, chain links \cite{KPR:VolChainLinks}, semi-adequate links \cite{fkp:guts}, weaving links \cite{CKP:Weaving}. Considering upper bounds only, there are better upper bounds on volumes of twisted torus links \cite{CFKNP}, asymptotically sharp upper bounds in terms of crossing number for certain families \cite{CKP:Geommax}, and bounds in terms of triple crossing number for others \cite{Adams:Triple}. There are also improved upper bounds on volumes and simplicial volumes for links in $S^3$ with diagrams with different features \cite{Adams:Bipyramids, DT:RefinedVol, DT:Simplicial}. There remain many open questions on how the volume of a link behaves with respect to its diagrams and other invariants. A goal of research in knot and link volumes is to find relations of such behaviour for broader families of knots, or to characterise properties of diagrams that make such relations possible.

Weakly generalised alternating links form a much broader family of links than merely alternating links, but they have similarities with alternating links that allow extension of some of the tools effectively used in the usual alternating case. Thus it is a natural question to investigate their volumes.

In \cite{HowiePurcell}, Howie and Purcell show that a lower bound on volume, similar to that for alternating links, holds for a weakly generalised alternating link satisfying mild hypotheses. That is, they define the twist number on the projection surface and show there is a linear lower bound on volume in terms of the twist number. However, they do not discuss the question of the upper bound, motivating the following.

\begin{question}\label{Ques:UpperBound}
Let $K$ be a link with a weakly generalised alternating projection on an embedded surface $F$ in a compact, irreducible 3-manifold $Y$. Is there a linear function of the twist number on the projection surface $F$ that gives an upper bound on the volume of the link complement $\vol(Y-K)$? 
\end{question}

In this paper, we address this question from different directions.
Our first result is to show that the answer is no when $Y=S^3$. Thus no linear function of twist number gives an upper bound on volumes for all weakly generalised alternating links.

\begin{theorem}\label{Thm:MainVolumeIntro}
  There exists a family of weakly generalised alternating knots $\{K_n\}_{n\in\NN}$ in $S^3$ with the following properties:
  \begin{itemize}
  \item The projection surface $F$ of each $K_n$ is a Heegaard surface of genus two.
  \item The volume $\vol(K_n)\to \infty$ as $\to\infty$. 
  \item The twist number of the diagram on $F$ is constant for all $n$.
  \end{itemize}
\end{theorem}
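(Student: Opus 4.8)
The plan is to build the family $\{K_n\}$ by hand on the standard genus-two Heegaard surface $F\subset S^3$, separating the two things that must be controlled: the combinatorics of the diagram, which fixes the twist number, and the way the knot wraps around a handle of $F$, which will drive the volume up. First I would fix a small alternating ``template'' diagram on $F$ whose twist regions all lie on disk faces, and then modify it by letting the knot run $n$ times around one of the two handles of $F$, inserting crossings so that the diagram remains alternating. The key design constraint is that the crossings created by this wrapping should lie on faces of the diagram that are \emph{not} disks on $F$, so that they do not assemble into bigon twist regions. With the diagram in hand, one checks the hypotheses of Howie and Purcell's hyperbolicity criterion (reducedness, and the representativity and no-short-cycle conditions) for every $n$, so that each $S^3-K_n$ is hyperbolic and the lower volume bound applies.

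Next I would verify that the twist number is constant. Every crossing belongs to some twist region, so the point is to show that the $n$ wrapping crossings do not create \emph{new} twist regions: because the faces they bound are essential on $F$ rather than disks, they are not removable bigons, and the total number of twist regions equals the fixed number $t_0$ coming from the template. This is a purely combinatorial check on the diagram of $K_n$.

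The heart of the argument, and the step I expect to be hardest, is the lower bound $\vol(S^3-K_n)\to\infty$. The usual obstruction to such growth is that an ordinary twist region can be undone by $\pm 1/k$ surgery on a single unknotted crossing circle; by monotonicity of simplicial volume under Dehn filling this would bound $\vol(S^3-K_n)$ above by the volume of a fixed augmented link, which is exactly why lengthening planar twist regions cannot increase volume. The construction is designed to defeat this: the wrapping crossings sit on essential faces, so the two strands involved do \emph{not} cobound a disk, the twisting cannot be consolidated onto one crossing circle, and no fixed Dehn-filling parent bounds the volume. To produce a positive, growing lower bound I would exhibit an essential spanning surface $\Sigma_n\subset S^3-K_n$ adapted to the wrapping and estimate its guts: since the wrapping crossings lie in no product ($I$-bundle) region, each should contribute to the guts, so $|\chi(\operatorname{guts}(\Sigma_n))|$ grows linearly in $n$. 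The bound of Agol, Storm, and W.~Thurston then gives $\vol(S^3-K_n)\ge c\,n\to\infty$ for a universal constant $c>0$.

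The main obstacle is therefore showing that the guts of $\Sigma_n$ genuinely grow, that is, that the complexity introduced by wrapping around the handle is neither absorbed into $I$-bundle or annular pieces nor removable by Dehn filling along crossing circles. This is precisely the feature that the genus-two surface makes possible: the planar polyhedral decomposition that forces Lackenby's twist-number upper bound in the $S^2$ case has no analogue here, so the extra complexity is free to survive in the guts. A secondary and more routine obstacle is checking that the Howie--Purcell hypotheses, hence hyperbolicity and the constancy of the twist number, hold uniformly in $n$ across the whole family.
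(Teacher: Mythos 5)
There is a genuine gap, and it sits exactly at the step you flagged as ``purely combinatorial.'' Under the paper's Definition~\ref{Def:TwistNumber}, a twist region is either a maximal string of \emph{bigon disc regions} or a \emph{single crossing adjacent to no bigons}. Your design deliberately places the $n$ wrapping crossings so that the faces between them are essential on $F$ rather than discs; but then those faces are not bigons, so each wrapping crossing is adjacent to no bigons and therefore counts as its own twist region. Far from creating no new twist regions, your construction makes $t_F(\pi(K_n))$ grow linearly in $n$, violating the constancy requirement of the theorem. The dichotomy is unavoidable: either the new crossings assemble into bigon strings on $F$ (one twist region, but then, as you yourself observe, the twisting consolidates onto a crossing circle bounding a twice-punctured disc and volume is bounded above by a fixed augmented link), or they do not (and then each crossing is a separate twist region). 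Your proposal needs a third option that the definition does not allow. The subsequent guts estimate for $\Sigma_n$ is also only a sketch, but it is moot: in your setup the Howie--Purcell lower bound would already force the volume up \emph{because} the twist number grows, which is precisely what the theorem forbids.

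The paper escapes this dichotomy by never adding crossings at all. It fixes one diagram (a grid-like template on the genus-two Heegaard surface, with crossings expanded into twist regions) and then changes the \emph{knot} by operations realized by homeomorphisms that preserve the Heegaard splitting and the diagram combinatorics: Dehn twists along compressing discs bounded by the curves $u$, $v$, $w$ of \reffig{RedCurves}, and $1/t_i$ surgeries along $2m$ unknotted curves $C_i$ (\reffig{TwistCurves}) which have linking number zero with the knot. Lemma~\ref{Lem:WGA} shows the resulting $K_n$ remain WGA with literally the same crossing and twist numbers on $F$. Volume growth then comes not from guts but from cusp count: the augmented links $L_m = J\cup\bigcup C_i$ are shown hyperbolic (\refprop{AugmentedHyperbolic}, using Gabai's Thurston-norm result to rule out essential tori surviving the fillings), Adams' theorem gives $\vol(S^3-L_m)\geq (2m+1)v_3$, and Thurston's Dehn filling theorem makes $\vol(S^3-K_n)$ approach $\vol(S^3-L_m)$ from below, hence exceed any prescribed $V$ for $m$ large. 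If you want to salvage your approach, the lesson is that the compressibility of the genus-two surface must be exploited through twisting along discs and nullhomotopic circles disjoint from the crossings --- not through wrapping the knot itself around a handle, which the twist-number bookkeeping punishes.
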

  Theorem~\ref{Thm:MainVolumeIntro} is an immediate consequence of a more general theorem stated as \refthm{MainVolume} below. 

One application of \refthm{MainVolumeIntro} is to cusp densities. Lackenby and Purcell~\cite{LPalte} showed that usual alternating knots on $S^2$ in $S^3$ have cusp volumes bounded below in terms of the twist number of the knot, and that cusp densities of these knots are universally bounded away from zero.
Recently, Bavier~\cite{Bavier} generalised the lower bound on cusp volume to weakly generalised alternating knots satisfying mild hypotheses.
However, \refthm{MainVolumeIntro} shows that a similar cusp density result cannot hold. Combining \refthm{MainVolumeIntro} with a result of Burton and Kalfagianni~\cite{BuKa} gives the following.

\begin{corollary}\label{Cor:Cusps}
There exist weakly generalised alternating knots with arbitrarily small cusp density.
\end{corollary}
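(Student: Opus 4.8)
The plan is to obtain the corollary by combining the divergence of volumes in \refthm{MainVolumeIntro} with an upper bound on the maximal cusp volume. Recall that for a one-cusped hyperbolic 3-manifold $M$ the cusp density is the ratio $\vol(C)/\vol(M)$, where $C$ is a maximal embedded horoball neighborhood of the cusp. By \refthm{MainVolumeIntro} the $K_n$ are hyperbolic with $\vol(K_n)\to\infty$, so it suffices to bound the maximal cusp volume $\vol(C_n)$ above by a single constant $B$, independent of $n$. The cusp density is then at most $B/\vol(K_n)$, which tends to $0$, and choosing $n$ large produces weakly generalised alternating knots of cusp density below any prescribed threshold.

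The substance of the argument is the uniform bound on $\vol(C_n)$. Burton and Kalfagianni~\cite{BuKa} bound the maximal cusp volume of a hyperbolic knot in $S^3$ above in terms of the complexity, essentially $-\chi$, of an essential spanning surface for the knot, so I would feed such a surface into their estimate. The natural choice is a checkerboard surface of the weakly generalised alternating diagram on $F$, which is essential by the results of Howie and Purcell~\cite{HowiePurcell}. The key observation is that although the crossing number of $K_n$ must grow with $n$ (the diagram lives on a fixed genus-two Heegaard surface, so bounded crossing number would force finitely many knots and hence bounded volume), the Euler characteristic of a checkerboard surface is insensitive to the bigons of a twist region: inserting one crossing into a twist region adds exactly one region and one crossing of the corresponding colour, and these cancel in $\chi = \#\{\text{regions}\} - \#\{\text{crossings}\}$. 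Choosing, for the growing twist regions, the checkerboard surface whose colour carries their bigons, one obtains a surface whose $-\chi$ is controlled only by the twist number and the genus of $F$, both constant along the family. Burton and Kalfagianni then supply the desired uniform bound $\vol(C_n)\le B$.

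I expect the main obstacle to be this middle step. One must verify that the chosen checkerboard surface meets the hypotheses of the Burton-Kalfagianni estimate (essentiality, and their treatment of possibly non-orientable spanning surfaces), and one must confirm that a single colour can be chosen consistently so that its checkerboard surface absorbs all of the growing twist regions for the given construction. The conceptual tension is that $\vol(K_n)$ is unbounded while the cusp must stay uniformly small, so the bound cannot use any quantity scaling with volume or crossing number; the cancellation above is precisely what supplies the bounded invariant, tied to the constant twist number, that controls the cusp. Once $\vol(C_n)\le B$ is in place, the division in the first paragraph finishes the proof.
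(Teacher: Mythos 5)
Your overall strategy is the paper's: show $\vol(K_n)\to\infty$ while the maximal cusp volume stays uniformly bounded via the Burton--Kalfagianni estimate applied to the checkerboard surfaces, then divide. But the middle step rests on a false premise. You assert that $c_F(\pi(K_n))$ must grow with $n$ because ``bounded crossing number would force finitely many knots.'' This fails precisely for diagrams on a genus-two Heegaard surface: \reflem{WGA} and \refthm{MainVolume} state that $c_F(\pi(K_n))$ (and the twist number) is \emph{constant} along the family, equal to $c_F(\pi(K^{*}))$. The twisting along the curves $C_i$ is absorbed by re-standardising the Heegaard splitting of $S^3$, so infinitely many distinct knots of unbounded volume share identical diagram combinatorics on $F$; a diagram on $F$ does not determine the knot without the embedding data. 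This constancy is exactly the engine of the paper's proof: applying \refthm{BK} to the two essential checkerboard surfaces of $K_n^{*}$, with $|\chi(S_1)|+|\chi(S_2)| = c_F(\pi(K_n^{*}))+2$ and $i(\bdy S_1,\bdy S_2) = 2c_F(\pi(K_n^{*}))$ (computed in the proof of \cite[Theorem~1.2]{BuKa}), one gets $\CV(K_n^{*})\leq \frac{9}{2}c_F\left(1+\frac{2}{c_F}\right)^2$, uniformly bounded because $c_F$ is fixed. No bigon-cancellation or colour-choosing is needed.

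Moreover, your fallback mechanism would not work even granting your assumption. \refthm{BK} requires \emph{two} essential spanning surfaces with $i(\bdy S_1,\bdy S_2)\neq 0$, and the bound is $9\,(|\chi(S_1)|+|\chi(S_2)|)^2/i(\bdy S_1,\bdy S_2)$; a single checkerboard surface of bounded complexity does not bound cusp volume (one surface controls only the length of its boundary slope, and two intersecting slopes are needed to trap the cusp area). Since inserting a crossing into a twist region leaves one checkerboard surface's $\chi$ unchanged but decreases the other's by one, the numerator's sum $|\chi(S_1)|+|\chi(S_2)|$ grows like $c_F$ no matter which colour you favour, and with $i(\bdy S_1,\bdy S_2)=2c_F$ the Burton--Kalfagianni bound grows linearly in $c_F$ rather than staying bounded. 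So if the crossing number really did grow, this route would give a divergent cusp volume bound and the proof would collapse. Finally, you should run the argument on the family $\{K_n^{*}\}$ whose complementary regions are all discs: \refthm{HP}'s essentiality of the checkerboard surfaces, and the Burton--Kalfagianni computation of $\chi$ and $i$ quoted above, require the regions of $F-\pi(K)$ to be discs, which fails for the family $\{K_n\}$ with its 3-holed sphere region.
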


The construction of the links in \refthm{MainVolumeIntro}, or more precisely \refthm{MainVolume}, uses heavily the fact that the links lie on a Heegaard surface $F$ of genus two in $S^3$. Recall that such a surface bounds a handlebody of genus two on both sides. Therefore $F$ is compressible, with several compressing discs, allowing us to twist the diagram without affecting the projection surface or the ambient manifold $S^3$. It is interesting to ask whether there exist linear upper bounds on volume in terms of twist number $t_F(\pi(K))$ in the case that the projection surface $F$ is incompressible in $Y$. In  fact, we show that there \emph{is} an upper bound when $Y=S\times [-1,1]$ and $K$ has a weakly generalised alternating projection onto $F=S\times\{0\}$. More specifically, we show the following, where the lower bound is due to Howie and Purcell \cite{HowiePurcell}.

\begin{theorem}\label{Thm:SxIUpperBound}
Let $S$ be a closed orientable surface of genus at least one, and let $K$ be a link that admits a twist-reduced weakly generalised alternating projection onto $F=S\times\{0\}$ in $Y=S\times[-1,1]$, for which the complementary regions of $F-\pi(K)$ are discs. Then the interior of $Y-K$ admits a hyperbolic structure. If $S=T^2$, then we have
\[  {\voct\over 2}\ t_F(\pi(K)) \leq \vol(Y-K) < 10\,\vtet\cdot t_F(\pi(K)), \]
where $\vtet = 1.01494\dots$ is the volume of a regular ideal tetrahedron and $\voct = 3.66386\dots$ is the volume of a regular ideal octahedron.

If $S$ has genus at least two, 
\[  {\voct\over 2}\ ( t_F(\pi(K))- 3\chi(F)) \leq \vol(Y-K) < 6\,\voct\cdot t_F(\pi(K)).\]
\end{theorem}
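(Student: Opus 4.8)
The lower bounds are due to Howie and Purcell \cite{HowiePurcell}, so my plan concentrates on hyperbolicity and on the two upper bounds; throughout write $t := t_F(\pi(K))$ for the fixed twist number. For hyperbolicity I would verify that the weakly generalised alternating hypotheses of \cite{HowiePurcell} hold in the thickened-surface case $Y = S\times[-1,1]$: since the complementary regions of $F-\pi(K)$ are discs and the projection is twist-reduced, the checkerboard surfaces are essential and $F=S\times\{0\}$ is incompressible, so Thurston hyperbolization applies. The boundary $S\times\{\pm 1\}$ is incompressible, and this is exactly why the two cases split: when $S=T^2$ the two ends are rank-two cusps (the setting of biperiodic alternating links), whereas for genus at least two they must appear as totally geodesic boundary, since a higher-genus boundary surface of a finite-volume hyperbolic manifold cannot be a cusp.

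To get a bound linear in the twist number rather than the crossing number, I would augment and then invoke Dehn-filling monotonicity of volume. Concretely, insert a crossing circle $c_i$ at each of the $t$ twist regions, bounding a disc that meets $K$ in the two strands of that region, and remove the full twists to obtain an augmented link $L = K'\cup c_1\cup\dots\cup c_t$ in $Y$. Then $K$ is recovered from $L$ by a nontrivial Dehn filling of the $c_i$, so by Thurston's theorem that filling strictly decreases volume, $\vol(Y-K) < \vol(Y-L)$, where $Y-L$ is hyperbolic by standard augmented-link arguments. This filling step is precisely what produces the strict inequalities in the statement, and it reduces the problem to bounding $\vol(Y-L)$ by a linear function of $t$.

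To bound the augmented volume I would decompose $Y-L$ into ideal polyhedra, Menasco-style, by cutting along $F$ and along the crossing discs; this is the thickened-surface analogue of the octahedral decompositions of Champanerkar--Kofman--Purcell for biperiodic alternating links \cite{CKP:Biperiodic}. In the $S=T^2$ case the two ends are cusps, and I would triangulate this decomposition following Agol and D.~Thurston \cite[Appendix]{lackenby:alt-volume}, checking that each twist region contributes at most ten ideal tetrahedra; bounding each by $\vtet$ yields $\vol(Y-L) < 10\,\vtet\cdot t$. In the genus at least two case the ends are totally geodesic boundary, so I would instead subdivide into ideal octahedra, aiming for at most six per twist region (three on each side of $F$), and bound each by $\voct$ to obtain $\vol(Y-L) < 6\,\voct\cdot t$. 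Composing with the filling inequality gives the two upper bounds.

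The main obstacle is the geometric decomposition of $Y-L$ together with the exact cell count. The delicate points are reconciling the totally geodesic boundary faces $S\times\{\pm 1\}$ with the ideal-polyhedral picture in the genus at least two case, and controlling how the crossing discs meet $F$ and the complementary discs of $F-\pi(K)$; the twist-reduced hypothesis should rule out the degenerate cells that would otherwise inflate the count. It is this combinatorial bookkeeping, rather than the filling estimate, that pins down the specific constants $10\,\vtet$ and $6\,\voct$, and I expect it to be where most of the work lies.
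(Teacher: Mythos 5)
Your architecture largely tracks the paper's proof: hyperbolicity and both lower bounds come from \refthm{HP} (with $-\chi(F)-\chi(\bdy Y) = -3\chi(F)$ since $\bdy Y$ is two copies of $F$), the upper bound is reduced to a fully augmented link $L$ via strict volume decrease under Dehn filling \cite{thurston:notes}, the decomposition of $Y-L$ is along the totally geodesic shaded $2$-punctured discs \cite{Adams:Thrice} and the white surface (geodesic by the reflection symmetry and Mostow--Prasad rigidity), and in the torus case your count of ten ideal tetrahedra per crossing circle --- four from the shaded triangles coned to the cusp $T^2\times\{1\}$ plus six from stellar subdivision of the white bipyramids --- is exactly the Agol--D.~Thurston count from \cite[Appendix]{lackenby:alt-volume}, giving $10\,\vtet\cdot t_F(\pi(K))$. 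One correction along the way: for genus at least two it is not true that the higher-genus ends ``must'' be totally geodesic; the interior of $Y-K$ admits infinitely many hyperbolic structures, and $\vol(Y-K)$ is \emph{defined} using the structure in which $S\times\{\pm 1\}$ are totally geodesic, following \cite{AdamsCalderonMayer, HowiePurcell}.

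The genuine gap is your genus-at-least-two upper bound. The plan to ``subdivide into ideal octahedra, at most six per twist region, and bound each by $\voct$'' cannot work as stated: the ends $S\times\{\pm 1\}$ are totally geodesic boundary, not cusps, so there is nothing to cone to ideally, and a manifold with higher-genus geodesic boundary admits no decomposition into all-ideal cells. If instead you truncate the cells, the estimate $\voct$ per octahedron is no longer justified, since truncated polyhedra can have strictly larger volume than their ideal counterparts --- this is exactly why the constant cannot be guessed from the ideal picture. The paper's mechanism, which is the missing idea here, is to cone the white and shaded faces to \emph{ultra-ideal} (truncated) vertices determined by the geodesic boundary at $S\times\{1\}$, glue faces in pairs into truncated bipyramids, and stellar-subdivide into generalised tetrahedra with two ideal and two ultra-ideal vertices: each crossing circle contributes twelve such cells (six from the four shaded triangles glued in pairs, six from the white-face edges, mirroring the torus count), and by \cite[Corollary~3.4]{AdamsCalderonMayer} each such generalised tetrahedron has volume at most $\voct/2$, whence
\[ \vol(Y-L) \leq 12\cdot\frac{\voct}{2}\, t_F(\pi(K)) = 6\,\voct\cdot t_F(\pi(K)). \]
Your constant $6\,\voct$ agrees numerically, but your proposed cell count (``six octahedra, three on each side of $F$'') and per-cell volume bound have no supporting argument; without the ultra-ideal coning and the Adams--Calderon--Mayer bound the higher-genus case does not close.
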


In \refthm{SxIUpperBound}, if $S$ is a torus then there is a unique hyperbolic structure on the interior of $Y-K$, and the volume is the hyperbolic volume of this structure. However, if $S$ has genus at least two, there are many hyperbolic structures. By $\vol(Y-K)$ we mean the volume of the unique hyperbolic structure on the interior of $Y-K$ for which the boundary components corresponding to $S\times\{-1\}$ and $S\times\{1\}$ are totally geodesic. This agrees with the definitions in \cite{AdamsCalderonMayer, HowiePurcell}.

Theorem~\ref{Thm:SxIUpperBound} has a somewhat surprising corollary, namely that there does exist an upper bound on the volume of a weakly generalised alternating link on a Heegaard torus $F$ in $S^3$; this is \refcor{TorAltUpperBound}. (Recall that a Heegaard torus bounds a solid torus on each side.)
Thus the fact that the family in \refthm{MainVolume} lies on Heegaard surfaces of genus at least two is necessary.

Theorem~\ref{Thm:SxIUpperBound} should be compared to related work. Adams, Calderon, and Mayer~\cite{AdamsCalderonMayer} found upper bounds on volumes of an alternating knot $K$ on $F=S\times\{0\}$ in $Y=S\times[-1,1]$ in terms of the crossing number $c_F(\pi(K))$. For $F=T^2\times \{0\}$ in $Y=T^2\times\RR$, Champanerkar, Kofman and Purcell~\cite{CKP:Biperiodic} found sharp upper bounds on volume in terms of the complementary regions in $F-\pi(K)$. 

By contrasting \refthm{MainVolume} and \refthm{SxIUpperBound}, we are led to the following.

\begin{question}
  For which manifolds $Y$ and which projection surfaces $F$ will a link with a weakly generalised alternating projection to $F$ in $Y$ admit a linear upper bound on volume, or simplicial volume, in terms of twist number $t_F(\pi(K))$? Will such a bound hold if $F$ is incompressible?
\end{question}

\noindent{\bf{Acknowledgements.}}
We thank Josh Howie for pointing out that \refcor{TorAltUpperBound} extends to lens spaces and toroidally alternating links. 
This work is based on research done while  Kalfagianni was on sabbatical leave, supported by NSF grants DMS-1708249, DMS-2004155 and a grant from the Institute for Advanced Study (IAS) School of Mathematics. The authors thank the IAS for helping to support Purcell's visit during this time. Purcell was also supported by the Australian Research Council, grant DP210103136.

\section{Definitions}\label{Sec:Definitions}

\subsection{Weakly generalised alternating links}
Throughout, we will take our projection surface $F$ to be an oriented, closed, connected surface embedded in a compact, irreducible, oriented 3-manifold $Y$, possibly with boundary. For the first part of the paper, namely through \refsec{Cusps}, $Y$ will be $S^3$. In \refsec{Thickened}, $Y=S\times[-1,1]$ and $F=S\times\{0\}$. (Disconnected projection surfaces appear in \cite{Howiethesis, HowiePurcell}, but we will not consider that generality here.) Given a link $K \subset F\times [0, \ 1] \subset Y$ in general position, the image of $K$ under a projection $\pi\from F\times [0, \ 1] \to F$, along with over--under crossing information at double points, is called a generalised link diagram, and is denoted by $\pi(K)$.
A generalised link diagram is said to be alternating if it can be oriented such that in each region of $F-\pi(K)$, crossings alternate over and under. 

\begin{definition}\label{Def:Wprime}
A generalised diagram $\pi(K)$ on $F$ is \emph{weakly prime} if, for any embedded disc $D\subset F$ that intersects $\pi(K)$ exactly twice, we have:
\begin{itemize}
\item If $F\neq S^2$, then $\pi(K)\cap D$ is an arc without crossings.
\item If  $F=S^2$, then $\pi(K)$ has at least two crossings and either $D$ or the complement of $D$ intersects $\pi(K)$ in an arc without crossings.
\end{itemize}
The diagram $\pi(K)$ is called \emph{reduced alternating} if it is alternating, weakly prime and each component of $K$ projects with at least one crossing on $F$.
\end{definition}

Howie has shown that the conditions on a reduced alternating diagram in $S^3$ suffice to show that the link is nontrivial, nonsplit, and prime \cite{Howiethesis}.

\begin{definition}
We say that the \emph{representativity} of a diagram $\pi(K)\subset F$ is at least $\rho$, and we write $r(\pi(K), F)\geq \rho$, if the boundary of every compressing disc of $F$ intersects $\pi(K)$ in at least $\rho$ points. If $F$ has no compressing disc, define $r(\pi(K), F) = \infty$.

We say that the \emph{edge-representativity} of $\pi(K)$ is at least $\rho$, and we write $e(\pi(K), F)\geq \rho$, if every essential curve on $F$ intersects $\pi(K)$ at least $\rho$ times.
\end{definition}

The terms representativity and edge-representativity for knots and links were introduced by Howie~\cite{Howiethesis}, motivated by terminology in topological graph theory; see for example the survey article~\cite{KawaMohar}.
Observe that the edge-representativity of a knot or link will always be at most its representativity. 

The following definition originally appeared in Howie's thesis~\cite{Howiethesis} for links in $S^3$, and was generalised to compact irreducible 3-manifolds by Howie and Purcell~\cite{HowiePurcell}.
\begin{definition}
A diagram $\pi(K)\subset F\subset Y$ is called \emph{weakly generalised alternating} (WGA), if it is reduced alternating and:
\begin{itemize}
\item the regions $F\setminus \pi(K)$ admit a checkerboard coloring, and
\item the representativity satisfies $r(\pi(K), F)\geq 4$.
\end{itemize}
A link $K$ that admits a WGA diagram is called \emph{weakly generalised alternating} (WGA).
\end{definition}

Howie and Purcell showed that the complement of a weakly generalised alternating link is nontrivial, irreducible, and boundary irreducible, again implying that the link is nonsplit in $S^3$ \cite[Corollary~3.16]{HowiePurcell}.

\begin{definition}\label{Def:TwistNumber}
Let $\pi(K)$ be a WGA projection on a surface $F$. A \emph{twist region} of $\pi(K)$ on $F$ is a portion of the underlying diagram graph of $\pi(K)$ that is either:
\begin{itemize}
\item a string of bigon regions arranged vertex to vertex that is maximal (i.e.\ not contained in a larger string of bigons of $\pi(K)$),
\item or a single vertex (i.e.\ crossing) adjacent to no bigons. 
\end{itemize}

The \emph{twist number} of $\pi(K)$ on $F$, denoted by $t_F(\pi(K))$ is the number of twist regions of $\pi(K)$.
\end{definition}

\begin{definition}
\label{Def:TwistReduced} 
A WGA diagram $\pi(K)\subset F$ is called \emph{twist reduced} if whenever there is a disc $D\subset F$ such that $\bdy D$ intersects $\pi(K)$ exactly four times adjacent to two crossings, then one of the following holds:
\begin{itemize}
\item $D$ contains a (possibly empty) sequence of bigons that is part of a larger twist region containing the two crossings, or
\item $F\setminus D$ contains a disc $D'$, with $\partial D'$ intersecting $\pi(K)$ four times adjacent to the same two crossings as $\partial D$, 
and $D'$ contains a string of bigons that forms a larger twist region containing the original two crossings. 
See \reffig{TwistReduced}.
\end{itemize}
\end{definition}

\begin{figure}
  \import{figures/}{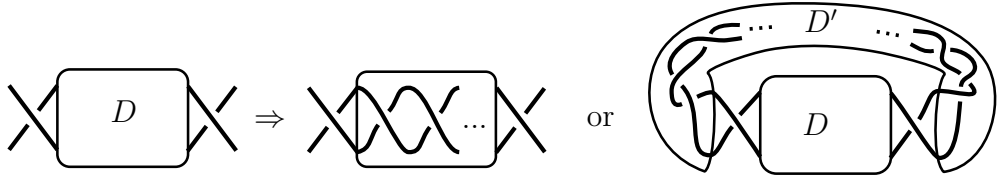}
  \caption{A twist reduced diagram. Figure modified from \cite{HowiePurcell}.}
  \label{Fig:TwistReduced}
\end{figure}

Note that any WGA diagram can be modified to be twist reduced by performing a flype on a disc that does not contain bigons, moving two twist regions together. See also the discussion after Definitions~6.3 and~6.4 in \cite{HowiePurcell}. 
Observe also that in the case that $F=S^2$, our definition of twist reduced agrees with that of Lackenby for usual alternating links~\cite{lackenby:alt-volume}; compare \reffig{TwistReduced} with Figure~3 of that paper.

The following is a special case of a result of Howie and Purcell~\cite[Theorem~1.1]{HowiePurcell}.

\begin{theorem}[Howie--Purcell]\label{Thm:HP}
Let $\pi(K)$ be a weakly generalised alternating projection of a link $K$ on a closed surface $F$ of genus at least one in the 3-manifold $Y=S^3$, or $Y=S\times[-1,1]$ and $F=S\times\{0\}$ for a surface $S$ homeomorphic to $F$. Suppose that $Y-F$ is atoroidal and that all the regions
of $F-\pi(K)$ are discs. Finally, suppose that $\pi(K)$ is twist-reduced and $r(\pi(K), F)>4$. Then the following hold.
\begin{itemize}
\item $Y-K$ is hyperbolic.
\item The checkerboard surfaces of $\pi(K)$ are essential in $Y-K$.
\item The hyperbolic volume satisfies
\[ \vol(Y-K) \geq {\voct\over 2}\ ( t_F(\pi(K))-\chi(F)-\chi(\bdy Y)).\]
\end{itemize}
\end{theorem}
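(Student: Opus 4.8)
The plan is to establish the three conclusions in sequence, since each feeds the next: a Menasco-type normal-form analysis yields essentiality of the checkerboard surfaces; essentiality together with the atoroidality hypothesis yields hyperbolicity; and then the Agol--Storm--Thurston machinery applied to an essential checkerboard surface yields the volume bound. Throughout, the representativity hypothesis $r(\pi(K),F)>4$ will play the role that primeness of the projection plane plays in the classical alternating case.

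First I would set up the crossing-ball decomposition. Replace a neighborhood of each crossing of $\pi(K)$ by a small ball (a ``bubble''), so that away from the bubbles $K$ lies on $F$, while on the boundary of each bubble the two strands run over the top and under the bottom. The black and white checkerboard surfaces are assembled from the corresponding shaded regions of $F-\pi(K)$, capped by saddle discs inside the bubbles. The technical heart of the argument is a normal-form theorem generalizing Menasco's: any essential surface $T$ in $Y-K$ can be isotoped so that it meets $F$ in simple closed curves, meets each bubble only in standard saddle or trivial pieces, and has no intersection curve bounding a disc in $F$ that meets $\pi(K)$ in too few points. This is exactly where the hypotheses do their work. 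The reduced alternating and weakly prime conditions control the arcs of $T\cap F$ adjacent to the bubbles, while $r(\pi(K),F)>4$ forbids the short intersection curves and low-intersection discs on $F$ that would otherwise appear.

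With the normal form in hand I would prove the checkerboard surfaces $B$ and $W$ are essential. A compressing disc, put into normal form, would project to a disc $D\subset F$ whose boundary meets $\pi(K)$ in at most two points, while a boundary-compressing disc would yield a low-intersection disc near a single crossing; weak primeness together with $r(\pi(K),F)>4$ rules both out, so $B$ and $W$ are incompressible and boundary-incompressible. For hyperbolicity I would first invoke the already-established fact that $Y-K$ is irreducible and boundary-irreducible \cite[Corollary~3.16]{HowiePurcell}, so that the presence of these essential surfaces makes $Y-K$ Haken. The remaining task is to rule out essential tori and annuli: by the normal-form theorem such a surface is either compressible (excluded) or can be isotoped off the bubbles into $Y-F$, where the atoroidality hypothesis on $Y-F$, together with an analysis of the parabolic locus on $\bdy N(K)$ in the annular case, gives a contradiction. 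After excluding a Seifert-fibered structure, Thurston's hyperbolization theorem for Haken manifolds produces the hyperbolic metric; in the case $Y=S\times[-1,1]$ one uses the version with totally geodesic boundary on $S\times\{\pm1\}$, matching the convention fixed for $\vol(Y-K)$.

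Finally, for the volume estimate I would apply the Agol--Storm--Thurston inequality $\vol(Y-K)\geq \tfrac{\voct}{2}\,|\chi(\operatorname{guts})|$ to the complement cut along an essential checkerboard surface. The guts is obtained by discarding the characteristic $I$-bundle and Seifert-fibered pieces, and these are precisely the product regions coming from strings of bigons within twist regions; a bookkeeping argument using that every region of $F-\pi(K)$ is a disc then shows $|\chi(\operatorname{guts})|\geq t_F(\pi(K))-\chi(F)-\chi(\bdy Y)$, which gives the stated bound (note $\chi(\bdy Y)=0$ when $Y=S^3$ and $\chi(\bdy Y)=2\chi(F)$ when $Y=S\times[-1,1]$). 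I expect the main obstacle to be the normal-form theorem of the second paragraph: adapting Menasco's sphere-based argument to a higher-genus surface $F$ requires representativity to substitute for primeness of the projection plane, and confirming that essential tori and annuli can genuinely be pushed into $Y-F$ is the delicate point on which both hyperbolicity and the guts computation ultimately rest.
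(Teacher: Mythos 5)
First, a point of orientation: the paper does not actually prove this statement --- it is quoted as a special case of \cite[Theorem~1.1]{HowiePurcell}, with a two-line proof-by-citation, so the argument you are reconstructing lives in that paper. At the level of architecture your sketch does match the Howie--Purcell strategy: essentiality of the checkerboard surfaces via a normal-form/combinatorial-area argument in which representativity $r(\pi(K),F)>4$ substitutes for primeness of the projection sphere, then irreducibility, atoroidality and anannularity feeding Thurston's hyperbolization (with totally geodesic boundary in the $S\times[-1,1]$ case), then a guts computation. The main structural difference is that Howie--Purcell do not run Menasco's crossing-ball normal form directly; they cut $Y$ along the projection surface into \emph{angled chunks}, generalising angled polyhedral decompositions, and use a combinatorial Gauss--Bonnet argument for normal surfaces in chunks. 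That machinery is the technical heart, and, as you concede, your sketch assumes the normal-form theorem rather than proving it --- in particular the claim that essential tori and annuli ``can be isotoped off the bubbles into $Y-F$'' is precisely where the all-disc-regions hypothesis and $r(\pi(K),F)>4$ must do real work.

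Beyond that deferral, there is a concrete error in your final step. The Agol--Storm--Thurston inequality for a single essential surface $S$ is $\vol(M)\geq \voct\cdot\chi_-(\operatorname{guts}(M\setminus S))$ (constant $\voct$, not $\voct/2$), and for a \emph{single} checkerboard surface the guts cannot satisfy $\chi_-\geq t_F(\pi(K))-\chi(F)-\chi(\bdy Y)$: cutting along, say, the white surface, the characteristic $I$-bundle absorbs the twist regions whose bigons are white, so the white-side guts essentially sees only the other colour; a diagram in which all bigons are white makes that guts far smaller than the twist number. (In the classical case $F=S^2$ this is exactly why \cite{AST} obtain $\vol\geq \frac{\voct}{2}(t-2)$ rather than $\voct(t-2)$.) The correct bookkeeping, as in \cite{lackenby:alt-volume, AST} and as generalised in \cite{HowiePurcell}, uses \emph{both} checkerboard surfaces: one proves
\[ \chi_-\bigl(\operatorname{guts}(M\setminus W)\bigr) + \chi_-\bigl(\operatorname{guts}(M\setminus B)\bigr) \;\geq\; t_F(\pi(K))-\chi(F)-\chi(\bdy Y), \]
applies the AST bound to each surface separately, and takes the larger of the two, which is where the factor $\voct/2$ originates. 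Your two intermediate claims (constant $\voct/2$ per surface, and the full twist-number bound on the guts of one surface) happen to multiply out to the right answer, but the second is false, so the step as written fails; note also that the twist-reduced hypothesis is needed precisely to certify that the $I$-bundle pieces are only the bigon strings, which your bookkeeping quietly uses.
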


In the theorem, the \emph{checkerboard surfaces} of $\pi(K)$ are obtained by taking the checkerboard colouring of $F-\pi(K)$ into white and shaded regions. Under the above hypothoses, these regions are coloured discs. Complete the union of the white discs into a spanning surface by joining two such dics by a twisted band when they are adjacent across a crossing of $\pi(L)$. Similarly for shaded discs. The two surfaces that arise are the checkerboard surfaces for $\pi(L)$. 

We finish this subsection with the following result, which is a special case of results from \cite{HowiePurcell} that we will need in later sections. 

\begin{theorem}[Howie--Purcell]\label{Thm:HP2}
Let $\pi(K)$ be a weakly generalised alternating projection of a knot $K$ on a closed surface $F$ of genus at least one in $S^3$. Suppose that $S^3-F$ is atoroidal and $r(\pi(K), F)>4.$ Then the following are true:
\begin{enumerate}
\item Suppose that $T$ is an essential torus in $S^3-K$. Then the checkerboard surfaces of $\pi(K)$ cut $T$ into annuli such that each boundary component of each of the annuli lies entirely in a single region of $F-\pi(K)$.
\item Suppose that $S^3-K$ contains an essential annulus. Then $\pi(K)$ has only one twist region.
\end{enumerate}
\end{theorem}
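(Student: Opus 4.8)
The plan is to adapt to the surface $F$ the normal-form technique that Menasco developed for alternating links on $S^2$ \cite{Menasco:Alternating}, following the framework set up by Howie and Purcell \cite{HowiePurcell}. The essential tool is that, under the standing hypotheses ($\pi(K)$ weakly generalised alternating with $r(\pi(K),F)>4$ and $S^3-F$ atoroidal), the white and shaded checkerboard surfaces $\Sigma_W$ and $\Sigma_B$ are essential in $S^3-K$, as established in \cite{HowiePurcell} (compare \refthm{HP}). Since $K$ is a knot, $\partial N(K)$ is a single torus. First I would isotope the essential torus $T$ (and later the essential annulus) so that it meets $\Sigma_W\cup\Sigma_B$ in the fewest possible curves. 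Because $T$, $\Sigma_W$ and $\Sigma_B$ are incompressible and $S^3-K$ is irreducible, an innermost-disc argument shows that no remaining intersection curve bounds a disc in $T$ or in either checkerboard surface; hence every curve of $T\cap\Sigma_W$ and of $T\cap\Sigma_B$ is essential both on $T$ and on the checkerboard surface containing it.

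For part (1), I would then cut $T$ along these curves. Each resulting piece lies in a single complementary region of $S^3-(\Sigma_W\cup\Sigma_B)$, that is, in one of the pieces of the Menasco-type decomposition of $S^3-K$ associated to the two sides of $F$, whose faces are the regions of $F-\pi(K)$ and whose edges record the crossings of $\pi(K)$. An Euler characteristic count on $T$, combined with the combinatorics of this decomposition, should force every piece to be an annulus: discs are excluded because the intersection curves are essential, and the remaining non-annular possibilities are ruled out by the representativity hypothesis, which forbids an intersection curve from closing up through too few crossings. To see that each boundary curve lies in a single region of $F-\pi(K)$, I would track how such a curve can cross the twisted bands of a checkerboard surface at a crossing; each band crossing forces the curve to re-cross $F$ near a crossing, and the strict inequality $r(\pi(K),F)>4$ is exactly what lets me remove these, isotoping each intersection curve into the interior of a single white or shaded region. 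This yields the stated structure of $T$.

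For part (2), I would apply the same normal-form analysis to the essential annulus $A$, whose two boundary curves lie on $\partial N(K)$. As in part (1), $\Sigma_W\cup\Sigma_B$ cuts $A$ into annuli, each with its boundary in a single region of $F-\pi(K)$, so the core of $A$ runs parallel to $K$ through the diagram. If $\pi(K)$ had two or more distinct twist regions, I would use the controlled position of $A$ to produce either a compression of $A$ or an essential curve on $F$ meeting $\pi(K)$ in at most four points, in either case a contradiction (the latter with $r(\pi(K),F)>4$). Hence all crossings of $\pi(K)$ lie in a single twist region.

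The main obstacle is the combinatorial case analysis at the heart of part (1): controlling precisely how the essential intersection curves sit inside the pieces of the decomposition and checking that $r(\pi(K),F)>4$ eliminates every configuration except annuli with boundary in a single region. This is where the higher genus of $F$ makes the argument more delicate than Menasco's original case $F=S^2$, since $F$ now carries compressing discs that must be handled through the representativity hypothesis; getting the strict bound $r>4$ (rather than $r\geq 4$) to do exactly this work is the crux.
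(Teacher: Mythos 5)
Your overall strategy---Menasco-style normal position adapted to the projection surface, with the representativity hypothesis standing in for the innermost arguments available on $S^2$---is indeed the right family of techniques, and it is the engine behind the results you are trying to prove. But note that the paper itself does not reprove this statement: \refthm{HP2} is attributed to Howie--Purcell, with part~(1) quoted as a special case of Proposition~4.10 of \cite{HowiePurcell} and part~(2) as a special case of Theorem~4.6 of \cite{HowiePurcell}; the only content the paper adds is the remark that an essential torus forces $F-\pi(K)$ to contain non-disc regions. So the real comparison is between your sketch and the arguments of \cite{HowiePurcell}, and there your proposal is an outline with genuine gaps rather than a proof.

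In part~(1), the step you yourself flag as ``the crux'' is essentially the entire content, and it is not carried out. Observe first that your Euler characteristic step is trivial: $T$ is closed and disjoint from $\partial N(K)$, so $T\cap(\Sigma_W\cup\Sigma_B)$ consists of closed curves; once these are essential on $T$ they are mutually parallel, and cutting a torus along disjoint essential curves yields annuli automatically, with no input from representativity. All the difficulty is concentrated in the claim that each resulting boundary curve can be isotoped off the crossing bands into a single region of $F-\pi(K)$, which you assert ``should'' follow from $r(\pi(K),F)>4$; that unperformed combinatorial analysis is precisely Proposition~4.10 of \cite{HowiePurcell}. Part~(2) has, in addition, a structural error: $\partial A$ lies on $\partial N(K)$, and the checkerboard surfaces also have boundary on $\partial N(K)$ with two distinct slopes (their difference counts the crossings), so $A$ cannot have the same boundary slope as both, and $A\cap(\Sigma_W\cup\Sigma_B)$ necessarily contains \emph{arcs} with endpoints on $\partial A$, not only closed curves. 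Hence your claim that ``$\Sigma_W\cup\Sigma_B$ cuts $A$ into annuli, each with its boundary in a single region of $F-\pi(K)$'' is false for the pieces meeting $\partial A$, and the arc analysis (boundary compressions, parallelism of $A$ into a twist region) is exactly how one locates the single twist region; your sketch omits it entirely. Finally, ``an essential curve on $F$ meeting $\pi(K)$ in at most four points'' contradicts the \emph{edge}-representativity, not $r(\pi(K),F)>4$, unless you additionally show the curve bounds a compressing disc; as written this conflates the two notions distinguished in \refsec{Definitions}.
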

\begin{proof}
Part (1) is a special case of Proposition~4.10 of \cite{HowiePurcell}. We note that if  $S^3-K$ contains an essential torus, then $F-\pi(K)$ must contain some regions that are not discs. 
Part (2) is a special case of Theorem~4.6 of \cite{HowiePurcell}. 
\end{proof}


\subsection{Cusped manifolds}
We will use the existence and non-existence of upper volume bounds to draw conclusions on cusp density of WGA knots. In this section, we review the necessary terminology. 

Suppose $\overline{M}$ is a compact orientable 3-manifold with one component of $\bdy\overline{M}$ a single torus, for example a knot complement, and suppose the interior $M \subset \overline{M}$ admits a complete hyperbolic structure. We say $M$ is a \emph{cusped manifold}. One end of $M$ is homeomorphic to $T^2\times [1,\infty)$. Denote this by $C$. Under the covering map $\rho\from \HH^3\to M$, this end is geometrically realised as the image of a horoball $H_i\subset \HH^3$, and the preimage $\rho^{-1}(\rho(H_i))$ is a collection of horoballs. By shrinking $H_i$ if necessary, we may ensure that these horoballs have disjoint interiors in $\HH^3$. For such a choice of $H_i$, $\rho(H_i) = C$ is said to be a \emph{horoball neighbourhood} of the \emph{cusp} $C$, or \emph{horocusp} in $M$.

For a cusped manifold $M$, a 1-parameter family of horoball neighbourhoods, parameterised by size of the neighbourhood, is obtained by expanding the horoball $H_i$ while keeping the same limiting point on the sphere at infinity. Taking the preimage in $\HH^3$, expanding $H_i$ expands all horoballs in the collection $\rho^{-1}(C)$. We may expand until the collection of horoballs $\rho^{-1}(\cup C)$ become tangent, and cannot be expanded further while keeping their interiors disjoint. This is a \emph{maximal cusp}. For the case here, in which $M$ has a single cusp, there is a unique choice of expansion, giving a unique maximal cusp.

\begin{definition}
For a hyperbolic knot $K$, with maximal cusp $C$, the \emph{cusp volume} of $K$, denoted by ${\rm CV}(K)$, is the volume of $C$. This volume is half of the Euclidean area of $\partial C$; see, for example, \cite[Exercise~2.31]{Purcell:HypKnotTheory}.
\end{definition}

The following is a result of Burton and Kalfagianni on cusp volumes,~\cite[Theorem~1.1]{BuKa}.

\begin{theorem}[Burton--Kalfagianni]\label{Thm:BK}
Let $K$ be a hyperbolic knot in $S^3$ with maximal cusp $C$ and cusp volume $\CV(K)$. Suppose that $S_1$ and $S_2$ are essential spanning surfaces in $S^3-K$ and let $i(\partial S_1, \partial S_2)\neq 0$ denote the minimal intersection number of $\partial S_1, \partial S_2$ on the torus $\partial C$. 
Then
\[ \CV(K) \leq 9 \ 
 \dfrac{(|\chi(S_1)| + |\chi(S_2)|)^2}{i(\partial S_1, \partial S_2)}.\]
\end{theorem}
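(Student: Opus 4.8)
The plan is to reduce the inequality to two essentially independent inputs: a purely Euclidean inequality on the maximal cusp torus $\partial C$, and a bound on the length of a boundary slope in terms of the Euler characteristic of an essential surface it bounds. Throughout I write $\ell(\partial S_j)$ for the slope length of $\partial S_j$ on $\partial C$, that is, the length of the geodesic representative of its slope.

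First I would record the flat-torus inequality. Since $\partial C$ is a Euclidean torus, write it as $\RR^2/\Lambda$, so that $\mathrm{Area}(\partial C) = 2\,\CV(K)$ by the definition of cusp volume. The slopes $\partial S_1$ and $\partial S_2$ are represented by primitive lattice vectors $v_1, v_2 \in \Lambda$ with $|v_j| = \ell(\partial S_j)$, and the hypothesis $i(\partial S_1,\partial S_2)\neq 0$ guarantees $v_1, v_2$ are independent. Comparing the exterior product with the covolume on one hand and with the product of norms on the other gives $|v_1\wedge v_2| = i(\partial S_1,\partial S_2)\,\mathrm{Area}(\partial C)$ and $|v_1\wedge v_2|\le |v_1|\,|v_2|$, hence
\[ 2\,\CV(K)\; i(\partial S_1,\partial S_2) = \mathrm{Area}(\partial C)\; i(\partial S_1,\partial S_2) \le \ell(\partial S_1)\,\ell(\partial S_2). \]
This elementary step is where the intersection number enters the denominator.

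Next comes the crux: bounding each slope length by the Euler characteristic of the surface bounding it, say $\ell(\partial S_j)\le c\,|\chi(S_j)|$ for a universal constant $c$. Here I would use essentiality (given in the hypotheses) to homotope $S_j$ to a pleated or least-area representative, truncate it at $\partial C$, and estimate: by Gauss--Bonnet the truncated surface has area at most $2\pi|\chi(S_j)|$, and a localized comparison near the cusp shows the horocyclic boundary length $\ell(\partial S_j)$ is dominated by the area the surface subtends in the cusp neighbourhood, yielding the linear bound. Substituting into the flat-torus inequality, using $2\,\CV(K)=\mathrm{Area}(\partial C)$ and the elementary estimate $4ab\le(a+b)^2$, produces
\[ \CV(K) \le \frac{c^2}{8}\,\frac{(|\chi(S_1)|+|\chi(S_2)|)^2}{i(\partial S_1,\partial S_2)}, \]
and the stated constant $9$ is obtained by inserting the appropriate value of $c$ from the slope-length bound and tracking the arithmetic.

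The main obstacle is the slope-length bound itself. Several points need care: a spanning surface may be non-orientable, so the pleated/least-area representative should be taken in the orientation double cover (or realized directly) while keeping $\partial S_j$ a single slope on the \emph{maximal} cusp; the geometry of $S_j$ inside the cusp neighbourhood must be controlled so that boundary length is genuinely governed by area through Gauss--Bonnet, rather than escaping into a long, thin collar; and the universal constant must be tracked closely enough to land on $9$, since any looseness in the area-to-length comparison is exactly what that factor absorbs. By contrast, the flat-torus inequality and the final algebraic combination are routine.
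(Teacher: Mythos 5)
This theorem is not proved in the paper at all---it is quoted from Burton--Kalfagianni \cite{BuKa}---but your proposal follows essentially the same route as their original argument: the flat-torus inequality $2\CV(K)\, i(\partial S_1,\partial S_2)=\mathrm{Area}(\partial C)\, i(\partial S_1,\partial S_2)=|v_1\wedge v_2|\le \ell(\partial S_1)\,\ell(\partial S_2)$, combined with the pleated-surface slope-length bound on the maximal cusp, $\ell(\partial S_j)\le 6\,|\chi(S_j)|$, which is the known Agol-type $6$-theorem estimate extended to possibly non-orientable spanning surfaces exactly as you indicate (e.g.\ via the boundary of a regular neighbourhood, whose Euler characteristic doubles while its two boundary curves share the slope, cancelling the factor of two). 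Inserting $c=6$ into your chain gives the constant $c^2/8=9/2\le 9$, so the stated bound follows with room to spare; the only ingredient you leave unproved is that slope-length bound itself, which is a citable theorem and whose genuine difficulties (non-orientability, controlling the cusp annuli so boundary length is dominated by area) you correctly identify.
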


\section{The construction of the knots}\label{Sec:Construction}

In this section, we will construct sequences of weakly generalised alternating knots in $S^3$ that satisfy \refthm{MainVolume}. In fact we will construct two distinct families of such knots.
In the first family, which we will denote
$\{K_{n}\}_{n\in \NN}$, some components $F-\pi(K_n)$ will not be discs while in the second family, denoted $\{K^{*}_{n}\}_{n\in \NN}$, all the complementary regions of the projections $\pi(K^{*}_n)$ will be discs.
For WGA knots in $S^3$ with the latter property, \refthm{HP} states that they are hyperbolic and their volume is bounded below by the quantity $t_F(\pi(K^{*}_n))-\chi(F)$.
For the knots $\{K_{n}\}_{n\in \NN}$, we will use tools from \cite{HowiePurcell} to show that their complements are hyperbolic; the requirement that all the regions of the WGA knot projections are discs is not necessary for the hyperbolicity claim of \cite[Theorem~1.1]{HowiePurcell}.

\subsection{The construction}\label{Sec:SubConstruction}
Begin with a generalised projection surface that is a genus two surface $F$ embedded as a standard Heegaard surface in $S^3$, sketched as on the left of \reffig{Template}.
We construct two graphs on $F$, which we will call the \emph{templates} for the knots we construct.

The first template is obtained by running six parallel curves around each of the handles of the bounded handlebody cut off by $F$ in $S^3$. The curves lie in the shaded region shown in the middle of \reffig{Template}. Three additional parallel curves encircle a disc in $F$, and each of these three curves intersects each the six strands exactly four times. Together, these six plus three curves form a graph on $F$, with 72 vertices in a grid-like pattern. This is shown in the middle of \reffig{Template}. Notice that this graph can be embedded on the standard plane of projection in $S^3$. Alternatively, we view the standard plane of projection in $S^3$ as cutting through the Heegaard surface $F$, splitting it into a 3-punctured sphere above and a 3-punctured sphere below. This first template lies entirely on the 3-punctured sphere above the plane of projection.

\begin{figure}
  \includegraphics{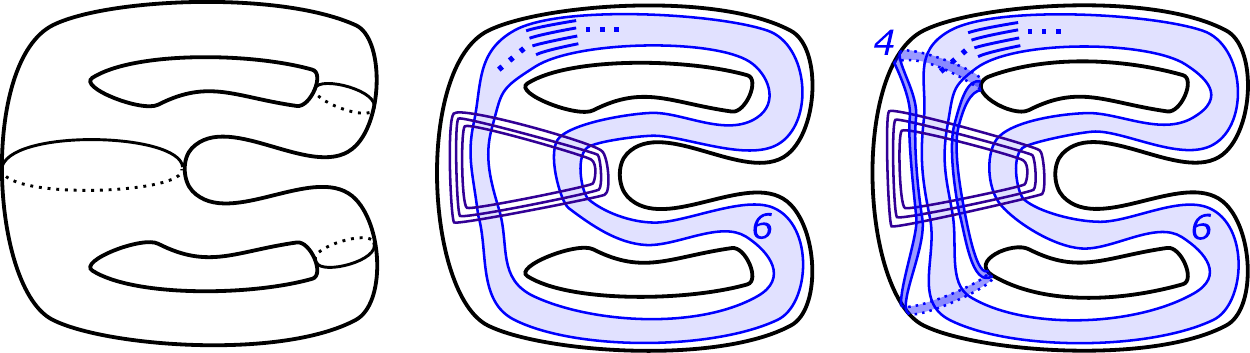}
  \caption{Left: a genus-2 Heegaard surface for $S^3$. Middle: the first template, giving a graph with 72 vertices and complementary regions all discs, except a single 3-punctured sphere region across the back. Right: the second template, giving a graph with 120 vertices and complementary regions all discs.}
  \label{Fig:Template}
\end{figure}

For the second template, add four additional curves to the first template. These curves are again parallel, disjoint from the six parallel strands running around the handles, and intersect each of the three other curves four times. Moreover, these curves run below the standard plane of projection for $S^3$, to run across the lower 3-punctured sphere of $F$ in eight arcs, in two parallel pairs. This is shown on the right of \reffig{Template}. Each of the four curves adds twelve vertices to the graph on $F$.

Make each template into an alternating link by assigning over/under crossing information to each vertex in an alternating fashion. Denote the link arising from the first template by $\hat{K}$, and the link arising from the secton by $\hat{K}^*$, with diagrams $\pi(\hat{K})$ and $\pi(\hat{K}^*)$, respectively.

\begin{lemma}\label{Lem:TemplateCheckTwred}
  The diagrams $\pi(\hat{K})$ and $\pi(\hat{K}^*)$ are checkerboard coloured and twist reduced.
\end{lemma}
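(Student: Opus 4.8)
The plan is to verify the two required properties—checkerboard colourability and twist-reducedness—separately, since each follows from structural features of the template construction rather than from any subtle geometric argument. I would begin by recalling that a generalised link diagram on $F$ admits a checkerboard colouring precisely when the underlying $4$-valent graph has all complementary regions two-colourable, equivalently when every vertex of the diagram graph has even degree in the region-adjacency sense; this is automatic for the shadow of a link, but the genuine content here is that the colouring extends consistently over the non-disc region (the back $3$-punctured sphere) in the first template.

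\textbf{Checkerboard colouring.} First I would observe that each template graph is constructed as a union of families of parallel curves (six strands around the handles, three encircling curves, and in the $\hat K^*$ case four more below) meeting transversally in a grid-like pattern, so the underlying shadow is the projection of a link in which every component is a closed curve crossing the others. For such a graph, each complementary region of $F-\pi(\hat K)$ is bounded by an even number of edges and the two-colouring is obtained in the standard way: colour a region shaded or white according to the parity of the minimal number of edges one crosses to reach it from a fixed base region, and check that this is well-defined by verifying that every closed loop in the dual graph crosses an even number of edges. The only step requiring genuine care is confirming that this parity condition is respected as one traverses the single non-disc (3-punctured sphere) region across the back of the first template; here I would argue directly from the symmetry of the six-plus-three curve configuration that each such region is adjacent to an even cycle, so the colouring closes up. For $\hat K^*$, all regions are discs, so the colouring is the usual alternating checkerboard colouring of a diagram whose regions are discs, and consistency is immediate.

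\textbf{Twist-reducedness.} Next I would turn to \refdef{TwistReduced}, which requires that every disc $D\subset F$ whose boundary meets $\pi(\hat K)$ exactly four times, adjacent to two crossings, either contains a string of bigons extending the two crossings into a common twist region, or has a complementary disc $D'$ with the same boundary-crossing data containing such a string. The strategy is to enumerate the possible positions of the two crossings singled out by such a disc $D$. Because the crossings in both templates arise at the transverse intersections of parallel curve families, the only way $\partial D$ can separate off exactly two crossings with four boundary intersections is if those two crossings are either consecutive along one of the parallel strands (hence already joined by bigons of a single twist region, satisfying the first bullet) or are positioned so that the complementary region $F\setminus D$ carries the string of bigons (satisfying the second bullet). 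I would make this rigorous by noting that the grid-like structure forces every such four-punctured disc to be a neighbourhood of a bigon chain, up to the compressibility of $F$ which allows the relevant flype disc to be pushed to either side.

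\textbf{Main obstacle.} I expect the principal difficulty to be the checkerboard-colouring argument for the first template $\pi(\hat K)$, precisely because its complement contains a region that is not a disc. The standard parity argument for two-colourability relies on all regions being simply connected, so I would need to verify explicitly that no essential loop on $F$, when pushed to the dual graph, picks up an odd intersection number with $\pi(\hat K)$ that would obstruct the colouring. Establishing this reduces to a finite homological computation on $H_1(F;\ZZ/2)$ together with an inspection of how the three encircling curves interact with the six handle-strands; once that parity vanishes, the colouring extends and the remaining verifications are routine combinatorics on the explicit grid. The twist-reducedness, by contrast, I expect to follow cleanly from the local picture at each crossing, since flypes on bigon-free discs are exactly what the definition permits and the templates contain no ``hidden'' four-punctured discs beyond neighbourhoods of bigon strings.
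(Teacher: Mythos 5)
Your parity criterion for the colouring is the right general tool---a diagram on a closed surface $F$ is checkerboard colourable if and only if its mod~$2$ class in $H_1(F;\ZZ/2)$ vanishes---but you never carry out the ``finite homological computation,'' and the shortcut you take for $\pi(\hat{K}^*)$ is wrong: having all complementary regions discs does \emph{not} imply two-colourability on a surface of positive genus. (Two curves on a torus meeting in a single point cut it into one disc region, yet that diagram is not checkerboard colourable.) The computation you deferred is easy and would repair both templates at once: the six handle strands form a parallel family of even cardinality, the four added strands likewise, and each of the three encircling curves bounds a disc in $F$, so the total $\ZZ/2$ class vanishes. The paper argues differently: it projects $\pi(\hat{K})$ to the standard plane of projection, invokes checkerboard colourability of ordinary alternating diagrams, and checks the colouring transfers consistently back to the genus-two surface across the 3-punctured sphere region, then adjusts by hand for the four extra strands of $\hat{K}^*$. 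Either route works, but yours remains a plan until the $\ZZ/2$ count is actually done.

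The twist-reduced argument has a concrete false step: in these templates, consecutive crossings along a strand are \emph{not} joined by bigons---each template has exactly two bigon faces---so two such crossings lie in \emph{different} twist regions, and the first bullet of \refdef{TwistReduced} would not rescue a disc $D$ isolating them; what must be shown is that no disc of the forbidden type exists around such pairs. Your assertion that the grid structure ``forces every such four-punctured disc to be a neighbourhood of a bigon chain'' is precisely the claim at issue, and the dangerous configurations are not grid-local: an arc of $\bdy D$ could run through the 3-punctured sphere region of the first template, or the large disc region of the second, pairing two crossings far apart in the grid. The paper's proof addresses exactly this. Isotope $\bdy D$ through the two crossings, so each of its two arcs lies in a single region; these two regions have the same colour (being diagonally opposite at each crossing); then check by inspection that no two same-coloured regions meet at two distinct crossings---the quads have four distinct corner regions, and the regions adjacent to each large face across its crossings are pairwise distinct---leaving only discs encircling the two bigons, which satisfy the definition. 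Your appeal to ``compressibility of $F$'' is also beside the point here: both $D$ and the disc $D'$ in the definition lie in $F$ itself, so compressing discs of $F$ in $S^3$ play no role in this lemma.
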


\begin{proof}
The best way to see both results is by inspection. The two diagrams are explicit. It is a somewhat soothing exercise to sketch the diagrams and colour the checkerboard surfaces by hand.

Alternatively, $\pi(\hat{K})$ can be projected onto the usual plane of projection in $S^3$, giving an ordinary alternating link, which must be checkerboard coloured. Note that in this ordinary alternating link, the region on the outside has the same colouring as the two regions inside the loops that ran around the handles before the projection. Therefore the colouring transfers to the genus-2 surface, giving a consistent colouring on the single 3-punctured sphere region under the plane of projection. Adding four parallel curves to this, as in $\pi(\hat{K}^*)$, does not affect most faces. One can check, using the fact that there are exactly four new strands, that the faces that are changed can have colouring modified to give a unified checkerboard colouring of the diagram.

As for twist reduced, suppose there is a disc $D$ in either $\pi(\hat{K})$ or $\pi(\hat{K}^*)$ with $\bdy D$ meeting the diagram exactly four times, adajcent to exactly two crossings, as in \reffig{TwistReduced}, left. Isotope $\bdy D$ to run through the adjacent crossings. This splits $\bdy D$ into two arcs between crossings, with each such arc lying in a single region of the diagram. Moreover, these two regions have the same colour. Thus such a disc will correspond to two regions of the same colour meeting at two distinct crossings. By inspection, most regions of the diagram are quads with four distinct regions meeting the four vertices (crossings) of the quad, so such a disc cannot run there. In the first template there is the 3-holed sphere region, but all similar coloured regions adjacent to the 3-holed sphere across crossings are distinct. Similarly in the second template there is another large disc region, but it meets only distinct regions at each of its crossings. Finally, in both templates there are exactly two bigon faces, and thus a disc meeting exactly two crossings encircling the bigon. This satisfies the definition of twist reduced.
\end{proof}

Next, replace each single crossing on $F$ in the link diagram $\pi(\hat{K})$ and $\pi(\hat{K}^*)$ with an alternating twist region with a high number of crossings so that the link remains alternating on $F$. Moreover, choose the twist region such that if the parallel strands of the template are oriented in the same direction, then the replacement preserves the orientation, as shown in \reffig{TwistOrientation}. Under this replacement, many bigon faces are added.
Choose the parity of the number of crossings in each twist region so that the result is a diagram of a knot in both cases, rather than a link. 

\begin{figure}
\includegraphics{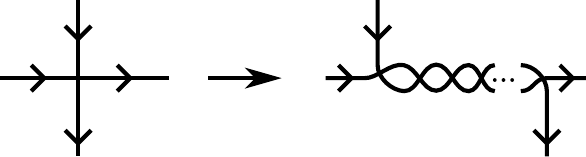}
  \caption{Insert twist regions at each crossing to preserve an orientation.}
  \label{Fig:TwistOrientation}
\end{figure}

Denote the knot arising from the first template by $K$, and the knot arising from the second by $K^{*}$.

\begin{lemma}\label{Lem:WeaklyPrime}
  The diagrams of the knots $K$ and $K^{*}$ are checkerboard coloured, twist reduced, and weakly prime. 
\end{lemma}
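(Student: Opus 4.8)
The plan is to reduce \reflem{WeaklyPrime} to \reflem{TemplateCheckTwred} by observing that the passage from the templates $\pi(\hat{K})$, $\pi(\hat{K}^*)$ to the knots $K$, $K^*$ only inserts alternating twist regions at the crossings, and such insertions preserve all three properties. First I would note that checkerboard colourability and the twist-reduced condition were already established for the templates, so the work is to track how these properties behave under the local replacement of a single crossing by a long alternating twist region. For the checkerboard colouring, inserting an alternating twist region at a crossing merely subdivides the two faces adjacent to that crossing into a string of bigons flanked by the original two faces; since the colours of the two faces meeting across a crossing are fixed and an alternating twist region respects the over/under alternation, the colouring on the template extends uniquely and consistently across each inserted twist region. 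Thus $K$ and $K^*$ inherit the checkerboard colouring from \reflem{TemplateCheckTwred}.

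Next I would verify the twist-reduced condition. Each crossing of the template expands into a twist region, so the twist regions of $K$ and $K^*$ are in natural bijection with those of the templates; inserting bigons does not create new essential twist regions nor merge distinct ones. A disc $D$ with $\bdy D$ meeting the diagram four times adjacent to two crossings, after isotoping $\bdy D$ through those crossings, corresponds as in the proof of \reflem{TemplateCheckTwred} to two like-coloured regions meeting at two distinct crossings. If both crossings lie in the same inserted twist region, then $D$ encloses a (possibly empty) string of bigons that is part of that twist region, satisfying the first clause of \refdef{TwistReduced}. If the two crossings lie in distinct twist regions of $K$ (equivalently distinct crossings of the template), then collapsing each twist region back to a single crossing recovers exactly the template configuration, which by \reflem{TemplateCheckTwred} was already twist reduced; hence no such offending disc arises there. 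So $K$ and $K^*$ are twist reduced.

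The remaining and genuinely new property to establish is weak primeness, in the sense of \refdef{Def:Wprime}: since $F \neq S^2$ here (it is a genus-two Heegaard surface), I must show that every embedded disc $D \subset F$ meeting $\pi(K)$ exactly twice cuts off an arc without crossings. The strategy is again to compare with the template. If $\bdy D$ meets the diagram in two points, those two points lie on edges of the diagram graph; an innermost/outermost argument on how $\bdy D$ sits relative to the grid-like pattern of faces should force $\pi(K)\cap D$ to be a single arc. The key point is that no single arc of the diagram between two intersection points can enclose a crossing on one side without $\bdy D$ being forced to meet the diagram more than twice, because the complementary regions of the templates are (apart from the one large 3-punctured-sphere region in the first template and the large disc region in the second) quadrilaterals with four distinct bounding regions, and the inserted bigons only lengthen edges without introducing new short cycles. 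I expect weak primeness to be the main obstacle, precisely because one must rule out the possibility that the large non-quadrilateral region, or a bigon string, lets $\bdy D$ bound a crossing; the cleanest route is to show any such $D$ would project to a weakly prime-violating disc for the corresponding ordinary alternating diagram on the plane of projection (using the projection described in \reflem{TemplateCheckTwred}), contradicting the primeness of a reduced alternating diagram in $S^3$.
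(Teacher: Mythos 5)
Your treatment of the first two properties is essentially the paper's own argument: the paper dispatches checkerboard colouring and twist-reducedness in one sentence (new bigons are added only to existing twist regions, so both properties pass from \reflem{TemplateCheckTwred} to $\pi(K)$ and $\pi(K^*)$), and your more detailed collapse-the-twist-regions argument for twist-reducedness is a sound elaboration of the same idea. You also correctly identify weak primeness as the genuinely new content. But the route you propose for it has a real gap.

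Your ``cleanest route'' --- project a putative bad disc to the plane of projection and invoke primeness of the resulting ordinary alternating diagram --- fails on two counts. First, it cannot apply to $K^{*}$ at all: only the \emph{first} template lies entirely in the 3-punctured sphere above the plane of projection; the second template has eight arcs running across the lower 3-punctured sphere, so there is no ``corresponding ordinary alternating diagram'' on $S^2$ for $K^{*}$ (the proof of \reflem{TemplateCheckTwred} only projects $\pi(\hat{K})$, not $\pi(\hat{K}^*)$). Second, even for $K$, an embedded disc $D\subset F$ meeting $\pi(K)$ twice need not project to an embedded disc in $S^2$: $\bdy D$ may wrap over the handles and cross the three curves $x$, $y$, $z$ in which $F$ meets the projection plane arbitrarily many times, and on the lower 3-punctured sphere the projection of $F$ to the plane is nowhere close to injective. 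The missing work is precisely to control these intersections, and that is what the paper's proof actually consists of: isotope $D$ to remove outermost bigons between $\bdy D$ and $x\cup y\cup z$, with a case analysis when such a bigon $B$ meets the diagram --- including the $K^{*}$-specific case where $B$ meets the diagram once on $\bdy D$ and once on $x$, $y$, or $z$, handled by sliding $D$ along an unknotted arc of the diagram past the curve. Only after all bigons are gone is $D$ trapped in a single 3-punctured sphere, where the conclusion follows because the diagram there is a grid (well known to be prime, with primeness unaffected by replacing a vertex with a twist region); this is the honest version of your ``innermost/outermost argument \dots should force'' gesture. Without this bigon-removal reduction, the projection step in your sketch has no justification, and the $K^{*}$ case is untouched.
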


\begin{proof}
The diagrams of $\pi(\hat{K})$ and $\pi(\hat{K}^*)$ are checkerboard coloured and twist reduced by \reflem{TemplateCheckTwred}. When adding bigons to these diagrams to form $\pi(K)$ and $\pi(K^*)$, the diagram remains checkerboard coloured and twist reduced, because new bigons are added only to existing twist regions.

Showing weakly prime requires a little more work. 
Consider the three closed curves of intersection of $F$ with the usual plane of projection in $S^3$; call them $x$, $y$, $z$. These divide $F$ into two pairs of pants, or 3-punctured spheres, one above the plane of projection and one below. The entire knot $K$ lies above the plane of projection, and is disjoint from the 3-punctured sphere below the plane of projection. Most of the knot $K^{*}$ lies above the plane of projection, but eight unknotted arcs of the diagram lie below. 

Now, suppose there is an embedded disc $D$ on $F$ whose boundary $\bdy D$ intersects the diagram $\pi(K)$ or $\pi(K^{*})$ in exactly two points. The boundary $\bdy D$ must intersect each of the curves $x$, $y$, and $z$ in an even number of intersection points. If it intersects any of $x$, $y$, $z$ in more than zero points, then there exists an outermost arc of $\bdy D$ co-bounding a disc with an arc of $x$, $y$, or $z$, forming a bigon $B$. 

Suppose first that the boundary of the bigon $\bdy B$ is disjoint from the knot diagram. Then we may use $B$ to isotope $D$ through $x$, $y$, or $z$, to reduce the number of intersections with $x$, $y$, and $z$. Repeat this a finite number of times to remove all bigons that are disjoint from the diagram. 

Next suppose that the boundary of $B$ intersects the diagram. Again $B$ is a disc, so $\bdy B$ intersects the knot diagram twice. Note that $B$ is embedded either above or below the plane of projection. If below, there are no crossings that $B$ could enclose, hence it bounds a single unknotted arc. If above, $B$ is a disc in a 3-punctured sphere containing a graph whose only vertices come from bigons replacing vertices in a grid of 72 or 120 points, with edges running from the outside of this grid. A grid is well known to form a prime diagram, with primeness unchanged by replacing a vertex with a twist region, so $B$ bounds a single unknotted arc of the diagram.

The boundary $\bdy B$ is made up of two arcs, one on $\bdy D$ and one on $x$, $y$, or $z$. The diagram either intersects $\bdy D$ twice, or $\bdy D$ once and $x$, $y$, or $z$ once (if the diagram is that of $K^{*}$). In the first case, the diagram intersects $\bdy D$ twice in $\bdy B$; these must be the only intersections of $\bdy D$ with the diagram. Hence since $B$ bounds an unknotted arc, so does $D$. In the second case, use $B$ to isotope $D$ along the unknotted arc of the diagram, pushing it past $x$, $y$, or $z$, and removing two intersections with these curves. After a finite number of steps, there are no bigons $B$.

Then $D$ lies either completely above or completely below the plane of projection. As before, it follows that it meets the diagram in a single unknotted arc. Thus the diagram is weakly prime. 
\end{proof}

Let $u$, $v$, and $w$ be the curves shown in \reffig{RedCurves}, which bound meridian discs in one of the handlebodies. Note they cut the surface $F$ into two 3-holed spheres. 
We may and will choose these curves to be disjoint from the vertices of our templates where the crossings of $\pi(K)$ or $\pi(K^{*})$ occur.

\begin{figure}
  \import{figures/}{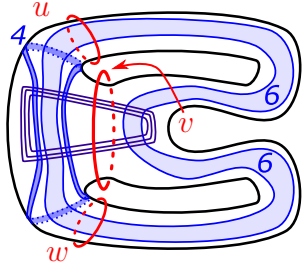}
  \caption{Shown are the curves $u$, $v$, and $w$. }
  \label{Fig:RedCurves}
\end{figure}

\begin{lemma}\label{Lem:K*EdgeRep}
  The edge-representativity $e(\pi(K^{*}), F)$ is equal to four.
\end{lemma}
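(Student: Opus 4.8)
The plan is to prove the two bounds $e(\pi(K^{*}),F)\le 4$ and $e(\pi(K^{*}),F)\ge 4$ separately, after first recording a parity observation that makes the lower bound much cheaper. Since $\pi(K^{*})$ is checkerboard coloured by \reflem{WeaklyPrime}, the sum of the shaded faces is a $2$-chain on $F$ whose mod-$2$ boundary is exactly the underlying graph of $\pi(K^{*})$ (each edge borders one shaded and one white face). Hence $[\pi(K^{*})]=0$ in $H_1(F;\ZZ/2)$, and every simple closed curve meets $\pi(K^{*})$ transversally in an even number of points. Because all complementary regions of $F-\pi(K^{*})$ are discs, a curve disjoint from $\pi(K^{*})$ lies in a single disc and is inessential; so every essential curve meets the diagram at least twice, and $e(\pi(K^{*}),F)$ is an even integer at least $2$. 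Thus the lower bound $e\ge 4$ reduces to the single claim that \emph{no essential simple closed curve meets $\pi(K^{*})$ in exactly two points}.

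For the upper bound I would exhibit one essential curve meeting the diagram in exactly four points. The natural candidate lives in the lower three-punctured sphere of $F$, below the standard plane of projection, where the only arcs of $\pi(K^{*})$ present are the eight coming from the four added strands of the second template, grouped in two parallel pairs. A simple closed curve transverse to these four strands, meeting each exactly once and otherwise disjoint from the diagram, crosses $\pi(K^{*})$ four times; using \reffig{Template} and \reffig{RedCurves} I would check that it can be chosen essential on $F$. This is precisely the role of the four extra strands of the second template, and it is the reason the edge-representativity is exactly four for $K^{*}$.

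It remains to exclude an essential curve $\gamma$ meeting $\pi(K^{*})$ in exactly two points. Put $\gamma$ in minimal position; its two intersection points cut it into two arcs, each lying in a single disc region of $F-\pi(K^{*})$. If both arcs lie in one region, $\gamma$ is contained in the closure of a disc and is inessential. If they lie in two distinct regions $R_1\neq R_2$, these are adjacent across the two edges that $\gamma$ crosses, and $R_1\cup R_2$ is either a disc or an annulus with core $\gamma$. The disc case contradicts $\gamma$ being essential, and is controlled by weak-primeness (\reflem{WeaklyPrime}); the annular case I would rule out from the explicit combinatorics of the templates, checking that no two distinct faces of $\pi(K^{*})$ are adjacent across two edges in an annular fashion, exactly in the spirit of the primeness and twist-reduced verifications of \reflem{TemplateCheckTwred} and \reflem{WeaklyPrime}, where one uses that almost all faces are quadrilaterals meeting four distinct regions and that the remaining large faces meet pairwise distinct regions across their edges. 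To be sure no essential curve evades this local analysis by winding through the handles, I would cut $F$ along the curves $u,v,w$ of \reffig{RedCurves} into two three-punctured spheres and track the intersections of $\gamma$ with each piece, using the density of the strands in each piece.

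The main obstacle is exactly this two-intersection case. The parity argument eliminates all odd intersection numbers at once and the crossingless case is immediate, so the entire content of the lower bound is the single statement that $F$ carries no essential annulus realised as the union of two faces of $\pi(K^{*})$. This is where the compressibility of the genus-two surface threatens to let a curve slip through the grid, and where I expect to lean hardest on the explicit description of the two templates rather than on general-position arguments alone.
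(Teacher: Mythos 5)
Your parity reduction is correct and appealing: checkerboard colourability makes $\pi(K^{*})$ the mod-2 boundary of the shaded regions, so every closed curve meets it in an even number of points, and since all regions of $F-\pi(K^{*})$ are discs, no essential curve is disjoint from the diagram; together with your exhibited curve in the lower 3-punctured sphere (essentially the paper's curve $x$, one of the intersections of $F$ with the projection plane, which meets the diagram four times and is essential because $x$, $y$, $z$ cut $F$ into pairs of pants) this correctly reduces the lemma to excluding an essential curve meeting $\pi(K^{*})$ exactly twice. Two small repairs in passing: the checkerboard colouring makes your first subcase vacuous, since each intersection point lies on an edge separating a white face from a shaded one, so the two arcs of such a curve always lie in faces of \emph{opposite} colours; and the union of two disc faces glued along two bands has Euler characteristic zero, so it is always an annulus on the orientable surface $F$, never a disc.

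The genuine gap is that the one case carrying all the content --- no essential annulus formed by a white face and a shaded face adjacent along two edge-crossings --- is asserted rather than proved, and the combinatorial facts you propose to prove it with belong to the wrong diagrams. ``Almost all faces are quadrilaterals meeting four distinct regions'' describes the templates $\pi(\hat{K}^{*})$ of \reflem{TemplateCheckTwred}; the diagram $\pi(K^{*})$ in the lemma is obtained by replacing every crossing with a twist region containing a high (and unspecified, varying with the construction) number of crossings, so it contains long chains of bigon faces, and any face-by-face adjacency check would have to be carried out uniformly in the number of inserted bigons. You gesture at the repair --- cutting $F$ along $u$, $v$, $w$ and tracking arcs --- but leave it undeveloped, and that cut-and-count step is in fact the paper's entire proof: an essential curve contained in one of the two 3-holed spheres is parallel to a boundary component there and must cross the six parallel strands, hence meets the diagram at least six times; otherwise, after isotoping away bigons of intersection with $u\cup v\cup w$, the curve contains an essential arc (a seam or a wave) in one of the 3-holed spheres, and every seam crosses the four strands special to $K^{*}$ while every wave crosses either those four strands or the six strands or the grid, giving at least four intersections in all cases. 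That strand-counting argument disposes of the two-intersection case (and every other case) at once, with no enumeration of face adjacencies; your proposal would be complete if the appeal to template combinatorics were replaced by this argument, at which point your parity observation becomes a pleasant shortcut rather than a load-bearing step.
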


\begin{proof}
The nontrivial curve $x$ on $F$ intersects $\pi(K^{*})$ four times, so edge-representativity is at most four. To show it is equal to four, we need to show that any other essential curve on $F$ intersects $\pi(K^{*})$ at least four times.

Let $\ell\subset F$ be an essential curve. Consider how $\ell$ intersects the 3-holed spheres obtained by cutting $F$ along the curves $u$, $v$, and $w$ of \reffig{RedCurves}. If $\ell$ is completely contained in one of the two 3-holed spheres, then the fact that $\ell$ is essential on $F$ means it is parallel to one of the three boundary components. But in either 3-holed sphere, the graph of the template includes six strands of the diagram running between each pair of boundary components, separated by a large grid. A closed curve must intersect this graph at least six times; it follows that $\ell$ intersects $\pi(K^{*})$ at least six times.

Next suppose $\ell$ intersects both 3-holed spheres. Then it must meet each in a collection of arcs. If any of these arcs co-bounds a bigon with one of $u$, $v$, $w$, then $\ell$ can be simplified by isotoping through the bigon. Such an isotopy will at worst, only decrease the number of intersections of $\pi(K^{*})$ with $\ell$, so we assume now that there are no bigons.

Because $\ell$ must alternate running between the two 3-holed spheres, it must have an essential arc on the left side. This is an essential arc on the 3-holed sphere, running from one boundary component to another, possibly the same component. If it runs from one boundary component to a distinct one, it is a \emph{seam}. Otherwise it is a \emph{wave}. Each seam must run through the four parallel strands appearing in $K^{*}$ and not $K$. Each wave either runs through these strands, or through the six parallel strands or the grid portion of the graph appearing in both $K$ and $K^{*}$. Thus in all cases, the arc meets $\pi(K^{*})$ at least four times. It follows that the edge-representativity is four.
\end{proof}

The next step of the construction is to modify the two diagrams of $K$ and $K^{*}$ by performing full (Dehn) twists of the handlebody $H_1$ along the discs bounded by curves $u$, $v$, and $w$, which we denote as \emph{twisting along $u$, $v$, and $w$}, for short. We will twist at least eight times along each curve. This adjusts the diagrams of $K$, $K^{*}$ by adding at least eight full twists, forming new knots, which we denote by $K_T$ and $K_T^{*}$.

Twisting along $u$, $v$, and $w$ gives a homeomorphism of the handlebody $H_1$. It restricts to a homeomorphism of $F$, that is a product of powers of Dehn twist along the curves.
In our case since we will twist  at least eight times along each curve, the power of each Dehn twist will be at least eight.

\begin{lemma}\label{Lem:KTalternating}
Twisting along $u$, $v$, and $w$ yields knots $K_T$ and $K_T^{*}$ with diagrams that are alternating on $F$, checkerboard coloured, twist reduced, and weakly prime.
Further, the edge-representativity of $\pi(K_T^{*})$ is four.
Finally, the complementary regions of the diagram on $F$ have the same topological type as the complementary regions of $\pi(K)$ or $\pi(K^{*})$ on $F$. In particular, there is still one 3-punctured sphere region on $F-\pi(K_T)$, and all other regions on $F-\pi(K_T)$ and $F-\pi(K_T^{*})$ are discs.
\end{lemma}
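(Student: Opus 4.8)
The plan is to exploit the fact that twisting along $u$, $v$, and $w$ is realised by a self-homeomorphism of $F$. As noted just before the statement, these handlebody twists restrict on $F = \partial H_1$ to a composition $\psi = \tau_u^{a}\tau_v^{b}\tau_w^{c}$ of powers (each at least eight) of the Dehn twists along $u$, $v$, $w$. At the level of diagrams, the projection of the twisted knot is exactly the image of the old one, so that $\pi(K_T) = \psi(\pi(K))$ and $\pi(K_T^{*}) = \psi(\pi(K^{*}))$. Thus the entire lemma reduces to the observation that each listed property is invariant under homeomorphisms of the pair $(F, \pi(\cdot))$, after which the conclusions follow from \reflem{WeaklyPrime} and \reflem{K*EdgeRep} applied to $K$ and $K^{*}$.

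I would then verify the properties in turn, most of which are purely topological statements about the pair $(F,\pi)$ and so transfer immediately. The checkerboard colouring is inherited by pushing the colours of $F - \pi(K)$ forward under $\psi$, and since $\psi$ carries complementary regions homeomorphically to complementary regions, the region types are unchanged: the single $3$-punctured sphere region of $F - \pi(K_T)$ is the $\psi$-image of the $3$-punctured sphere region of $F - \pi(K)$, and all remaining regions stay discs. Weak primeness and twist-reducedness are conditions on embedded discs $D \subset F$ meeting the diagram in a prescribed way (\refdef{Wprime} and \refdef{TwistReduced}); since $\psi$ is a homeomorphism it induces a bijection $D \mapsto \psi(D)$ between the relevant discs for $\pi(K)$ and for $\pi(K_T)$ preserving all intersection and crossing data, so these properties transfer verbatim. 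Likewise $\psi$ sends essential curves to essential curves with the same geometric intersection number with the diagram, so $e(\pi(K_T^{*}), F) = e(\pi(K^{*}), F) = 4$ by \reflem{K*EdgeRep}.

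The one property that is not simply a statement about $\pi$ as a decorated graph on $F$ is that the diagram remains alternating, since this uses the over/under crossing data; this is the step I expect to require the most care. To handle it I would observe that the handlebody twist is an ambient operation, extending over a collar $F \times [0,1]$ of $F$ as $\psi \times \mathrm{id}$, which fixes the height coordinate and hence preserves the over/under information at every crossing. Because the curves $u$, $v$, $w$ were chosen disjoint from the crossings of $\pi(K)$ and $\pi(K^{*})$ (see \reffig{RedCurves}), no crossing is disturbed: twisting only re-routes the strands in the complement of the crossings, so the crossings, their over/under type, the bigons, and the twist regions are carried bijectively to those of the twisted diagram. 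The alternating condition, being a local condition at each crossing together with the face structure, is therefore preserved, and the twist count is literally inherited. The main obstacle is thus the bookkeeping needed to confirm that the diagram-level effect of the handlebody twist is exactly $\psi$ with over/under preserved, and that disjointness from the crossings guarantees that the combinatorial structure survives intact; once this is in place, every clause of the lemma follows from the corresponding property of $K$ and $K^{*}$.
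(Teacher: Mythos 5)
Your proposal is correct and follows essentially the same route as the paper: the paper likewise treats the twisting as the induced surface homeomorphism (a product of powers of Dehn twists along $u$, $v$, $w$, supported in annuli disjoint from the crossings), observes that the re-routed arcs acquire no new crossings so the alternating property and checkerboard colouring persist, transfers weak primeness and twist-reducedness by applying the inverse twist to a candidate disc $D$, and deduces edge-representativity from the fact that homeomorphisms preserve essential curves and intersection numbers. Your collar-extension phrasing ($\psi \times \mathrm{id}$ fixing the over/under data) is just a slightly more global packaging of the paper's local ``no new crossings in the annulus'' argument, so there is no substantive difference.
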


\begin{proof}
Twisting along any curve $u$, $v$ or $w$ restricts to a power of a Dehn twist on $F$; such a homeomorphism is the identity away from an annulus neighbourhood of the curve. That annulus intersects the knot diagram in six essential arcs running from one boundary component to the other. The arcs subdivide the annulus into six discs that are coloured in a checkerboard fashion. After twisting, the arcs run into the annulus, around its core some number of times, and then out, but no new crossings are added. Because the knot was alternating on $F$ before twisting, the new knot remains alternating on $F$. The checkerboard coloured discs between arcs of the knot diagram are mapped to new discs, with checkerboard colouring preserved under twisting.

To see that the diagram is still weakly prime, suppose a disc $D$ on $F$ meets the diagram $\pi(K_T)$ or $\pi(K_T^{*})$ twice transversally in edges. Apply the inverse Dehn twist to the picture. This preserves $F$ and takes diagram to $\pi(K)$ or $\pi(K^{*})$, respectively, and takes $D$ to a disc now meeting $\pi(K)$ or $\pi(K^{*})$ twice. Because $\pi(K)$ and $\pi(K^{*})$ are weakly prime by \reflem{WeaklyPrime}, the disc must contain no crossings, both before and after homeomorphism, and $\pi(K_T)$ and $\pi(K_T^{*})$ are weakly prime. A similar argument shows the diagram is still twist reduced after twisting.
  
As for edge-representativity, recall that
by Lemma~\ref{Lem:K*EdgeRep} any essential curve on $F$ meets $\pi(K^{*})$ at least four times. Since surface homeomorphisms map essential curves to essential curves and preserve intersection numbers of curves, it follows that any essential curve on $F$ meets $\pi(K_T^{*})$ at least four times. Thus the edge-representativity of $\pi(K_T^{*})$ is four.
\end{proof}

Recall that by construction the crossings of $\pi(K_T)$ or $\pi(K_T^{*})$ are associated with the vertices of the corresponding templates. 
In particular we have arranged so that there are no crossings in the annuli where the Dehn twists along $u$, $v$, or $w$ are supported, and no crossings in the 3-punctured sphere below the plane of projection.
All crossings of $\pi(K_T)$ and $\pi(K_T^{*})$ on $F$ lie in one of two discs on $F$, one on either side of the curve $v$. We call these the \emph{alternating tangles} of $\pi(K_T)$ or $\pi(K_T^{*})$. They are illustrated for the template of $K^{*}$ in \reffig{AltTangle}, left, in the regions shaded in purple. 

\begin{figure}
  \includegraphics{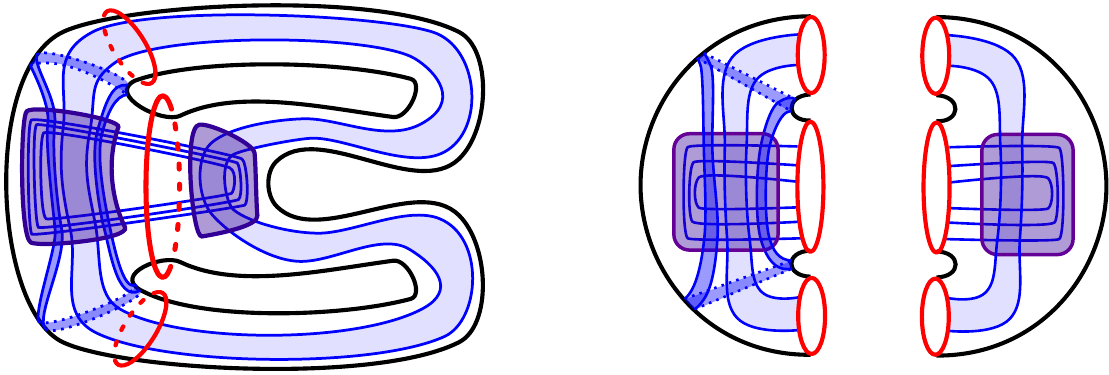}
  \caption{Left: The alternating tangles of $\pi(K_T^{*})$ are shaded in purple; $\pi(K_T)$ is similar. Right: Cutting $H_1$ along discs bounded by $u$, $v$, $w$.}
  \label{Fig:AltTangle}
\end{figure}

\begin{lemma}\label{Lem:Representativity}
Each of the knot diagrams $\pi(K_T)$ and $\pi(K_T^{*})$ has representativity at least six.
\end{lemma}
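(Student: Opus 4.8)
\section*{Proof proposal}

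The plan is to show directly that every compressing curve of $F$ meets the diagram in at least six points, rather than to compare with the edge-representativity, which is only four for $\pi(K_T^{*})$ by \reflem{K*EdgeRep}. Two cheap general facts will do most of the work. First, a \emph{parity observation}: since both diagrams are checkerboard coloured (\reflem{KTalternating}), the underlying projection is null-homologous in $H_1(F;\ZZ/2\ZZ)$, so every simple closed curve on $F$ meets the diagram in an even number of points. Second, a compressing curve is in particular essential, so it meets the diagram at least four times (edge-representativity is at least four, as in \reflem{K*EdgeRep}). Combining these, any compressing curve meets $\pi(K_T)$ or $\pi(K_T^{*})$ in an even number of points that is at least four, and the entire content of the lemma becomes the task of \emph{excluding the value four} for compressing curves.

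I would organise the count by the two handlebodies $H_1$ and $H_2$ cut off by $F$, and first remove the twisting on the $H_1$ side. The twisting homeomorphism $\phi$ (a product of powers of Dehn twists along $u$, $v$, $w$) is a self-homeomorphism of $H_1$ carrying $\pi(K)\mapsto\pi(K_T)$ and $\pi(K^{*})\mapsto\pi(K_T^{*})$; as a homeomorphism of $F$ it preserves geometric intersection numbers and sends $H_1$-compressing curves to $H_1$-compressing curves. Hence the minimal intersection of an $H_1$-compressing curve with a twisted diagram equals that for the untwisted diagram, reducing this case to $\pi(K)$ and $\pi(K^{*})$. For $H_2$-compressing curves the twist does not extend, but here there is a clean observation: such a curve cannot be disjoint from $u$, $v$, $w$ (otherwise it would lie in one of the two pairs of pants of \reffig{RedCurves}, be parallel to a boundary, and bound a disc in $H_1$, not $H_2$), so it must cross some $u$, $v$, or $w$. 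Since the twisting is by high powers, the inverse-twisted curve is then forced to wind many times through the bundle of six parallel strands, easily giving at least six intersections.

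For the core exclusion on the untwisted diagrams I would put $\partial D$ into minimal position and cut the relevant handlebody along the meridian discs bounded by $u$, $v$, $w$, which split $F$ into two pairs of pants. The essential arcs of $\partial D$ in each pair of pants are exactly the seams and waves of the edge-representativity argument, each crossing the diagram at least four times. The point is that a curve meeting the diagram exactly four times must be a single seam through the four special strands of $K^{*}$, i.e.\ it is isotopic to one of $x$, $y$, $z$, which are longitudinal rather than meridional for the Heegaard splitting. I would make this precise homologically: the minimal curves are non-separating, and I would check that their classes in $H_1(F;\ZZ)$ lie outside the Lagrangian kernels of both $H_1(F;\ZZ)\to H_1(H_1;\ZZ)$ and $H_1(F;\ZZ)\to H_1(H_2;\ZZ)$, so they bound no compressing disc on either side. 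Together with the parity constraint, ruling out four then forces at least six.

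The main obstacle is precisely this last step: showing that no curve meeting the diagram exactly four times bounds a compressing disc on either side. The parity and edge-representativity inputs are routine, but excluding four requires genuinely identifying which essential curves on the genus-two Heegaard surface are meridional, and pinning down the homology classes of the minimal curves; this is where the explicit placement of $u$, $v$, $w$ relative to the six parallel strands and the four special strands of the template must be used, and where the handlebody-cutting argument has to be carried out carefully for both handlebodies and both diagrams (including ruling out a short separating meridian).
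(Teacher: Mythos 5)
Your parity observation and your reduction of the $H_1$ case to the untwisted diagrams via the twisting homeomorphism $\phi$ are both fine, but the proposal has two genuine gaps, and the second is fatal as written. First, the ``floor of four'' is unavailable for $\pi(K_T)$: \reflem{K*EdgeRep} concerns only $\pi(K^{*})$, and by \reflem{KTalternating} the complement $F-\pi(K_T)$ still contains a 3-punctured sphere region, so there are essential curves on $F$ meeting $\pi(K_T)$ \emph{zero} times --- the edge-representativity of $\pi(K_T)$ is $0$, not $\geq 4$. Hence for that diagram the task is not ``exclude four'' but ``exclude zero, two and four,'' which is most of the content of the lemma. Worse, the untwisted diagrams really do have low representativity on the $H_2$ side: the curves $x$, $y$, $z$ bound compressing discs \emph{in $H_2$} (discs in the projection sphere, inside the handle openings and in the outer region), and they meet $\pi(K)$ zero times and $\pi(K^{*})$ four times. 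So the truth of the lemma on the $H_2$ side depends essentially on the twisting, and no argument about the untwisted diagrams alone can succeed there.

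This is exactly where your $H_2$ step breaks. The claim that an $H_2$-compressing curve crossing $u\cup v\cup w$ is ``forced to wind, easily giving at least six intersections'' is false for general curves: take $C=\phi(x)$, the image of $x$ under the twisting homeomorphism. It intersects $u\cup v\cup w$ essentially (twisting along $u$ does not reduce intersection number with $u$), yet $i(C,\pi(K_T^{*}))=i(x,\pi(K^{*}))=4$, since $\phi^{-1}$ \emph{unwinds} it. What saves the lemma is only that $\phi(x)$ no longer bounds a disc in $H_2$, and nothing in your winding argument sees the compressing disc. The paper's proof supplies precisely the missing mechanism: it works directly with the twisted diagrams (all crossings lie in the two alternating tangles), cuts the relevant handlebody along meridian discs --- those bounded by $u$, $v$, $w$ for $H_1$, and the projection-sphere discs bounded by $x$, $y$, $z$ for $H_2$ --- puts the compressing disc $E$ in minimal position, and takes an innermost arc of $E$ to produce a \emph{wave} in one of the resulting pairs of pants. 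A wave separates the other two boundary components of the pants, so it must cross either a bundle of at least six parallel strands or an alternating tangle, where it picks up at least six intersections; this uses the disc itself, not just the isotopy or homology class of its boundary. Your Lagrangian-kernel plan can certify non-bounding only class by class, and without the wave argument it would require a complete enumeration of all curves meeting each diagram at most four times (including zero and two for $\pi(K_T)$) together with their homology classes --- the step you yourself identify as the main obstacle, and which the proposal does not carry out.
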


\begin{proof}
The surface $F$ divides $S^3$ into two handlebodies. Denote the bounded handlebody by $H_1$, and the unbounded one (i.e.\ that contains infinity) by $H_2$. We must argue that every curve $C$ that bounds a compressing disc in either $H_1$ or in $H_2$ will intersect the diagram in at least six points. Observe first that $u$, $v$, and $w$ bound meridian disc of $H_1$ meeting the diagram at least six times. We need to show a similar result for any choice of a curve $C$ bounding a compressing disc $E$. 

Suppose first that $C$ bounds a compressing disc for $H_1$. Note that the curves $u, v, w$ cut the surface $F$ into two pairs of pants, $P_1$, $P_2$, and the discs bounded by $u, v, w$ cut $H_1$ into 3-balls. This is shown on the right of \reffig{AltTangle}.
Isotope $C$ so that its intersection with $u\cup v\cup w$ is minimal on $F$. By further isotopy we can arrange so that the disc $E$ bounded by $C$ intersects those bounded by $u, v, w$ in arcs (not simple closed curves).
Now an arc of this intersection that is innermost on $E$ will define a wave on one of $P_1,P_2$. That is, it forms an essential arc on $P_i$ with both of its boundary points on the same component of $\partial P_i$. 
Such a wave will separate the two boundary components of $P_i$  that is disjoint from. Thus it either runs through at least six parallel strands of the diagram, or it intersects the diagram in the alternating tangle. Within the alternating tangle, the number of intersection points must be at least six. Hence in either case, the wave will intersect the diagram in at least six points. This finishes the claim for $H_1$.

The proof for $H_2$ is similar, only now cut $H_2$ into balls bounded by pants along the plane of projection. This cuts $F$ into two pants, one above and one below the plane of projection, as shown in \reffig{Represent}.
\begin{figure}
  \includegraphics{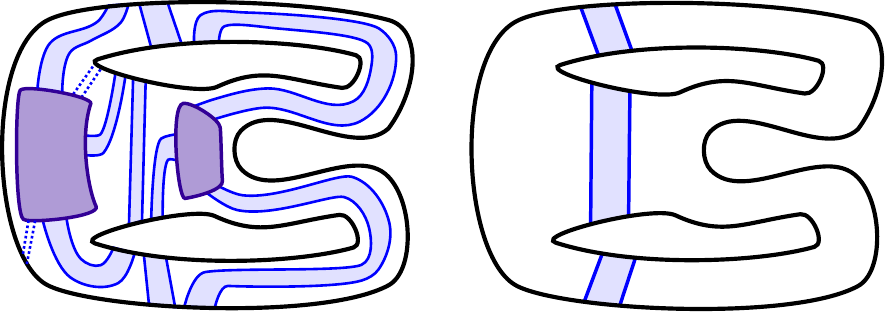}
  \caption{The form of the two pants obtained by cutting $F$ along the projection plane. The pants on the left lies above the plane of projection, and contains two alternating tangles plus a large number of parallel strands (at least six) following the arcs shown around those tangles (the dotted lines represent arcs in $K^{*}$ but not in $K$). On the right is the pants below the plane of projection, with at least six strands running in the shaded regions shown.}
  \label{Fig:Represent}
\end{figure}
Denote the boundary curves of the pants by $x$, $y$, and $z$. A compressing disc $E$ for $H_2$ can be isotoped to intersect $x$, $y$, and $z$ minimally, and to intersect the discs bounded by these curves only in arcs. Then an outermost arc of $E$ has boundary forming a wave. This wave separates boundary components, so again it either runs through the alternating tangle, meeting the knot diagram at least six times, or runs through a region with a high number of parallel strands, again meeting the knot diagram at least six times. 
\end{proof}

The final step in the construction of the full sequence of knots for \refthm{MainVolume} is to twist along pairs of curves. Fix an integer $m>0$. Add $m$ pairs of unknotted, unlinked components to the diagram $C_1, C_2, \dots, C_{2m-1}, C_{2m}$ as shown in \reffig{TwistCurves}. 

\begin{figure}
  \import{figures/}{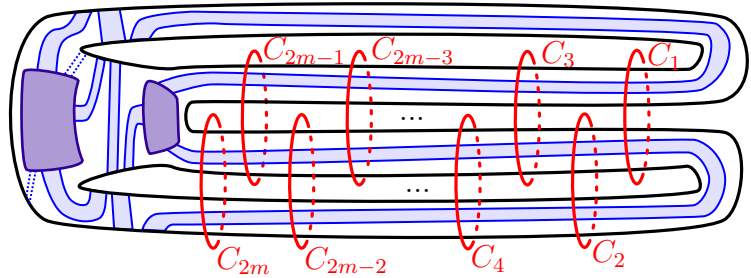}
  \caption{The curves to twist along.}
  \label{Fig:TwistCurves}
\end{figure}

\begin{lemma}\label{Lem:Linking}
  Each curve $C_i$ as above has linking number $0$ with $K_T$ or $K_T^{*}$.
\end{lemma}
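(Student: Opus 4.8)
The plan is to compute $\operatorname{lk}(C_i,K_T)$ (and likewise $\operatorname{lk}(C_i,K_T^{*})$) as a signed intersection number and to show that the contributions cancel because of the coherent orientation that was built into the twist regions. First I would use that each $C_i$ is an unknot, so it bounds an embedded disc $D_i\subset S^3$, and that $\operatorname{lk}(C_i,K_T)$ equals the algebraic (signed) intersection number of $K_T$ with $D_i$; equivalently, it is one half of the signed count of the crossings between $C_i$ and $K_T$ in the diagram of \reffig{TwistCurves}. Thus it suffices to identify the strands of $K_T$ that pierce $D_i$ and to record their signs. From \reffig{TwistCurves}, the disc $D_i$ is pierced precisely by a small collection of parallel strands of the diagram.

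Next I would bring in the orientation. Recall from the construction (\reffig{TwistOrientation}) that the twist regions were inserted so as to respect a fixed coherent orientation of the parallel strands of the templates; this is precisely the reason the construction was careful about orientation. Reading this orientation off in \reffig{TwistCurves}, the strands of $K_T$ meeting $D_i$ occur in oppositely oriented pairs, so their signed contributions to the intersection number are $+1$ and $-1$ and cancel. Hence $\operatorname{lk}(C_i,K_T)=0$. The identical argument yields $\operatorname{lk}(C_i,K_T^{*})=0$, since the extra strands of the second template are oriented coherently in the same way and sit in the same anti-parallel configuration relative to each $C_i$.

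The only real content, and the step I would be most careful about, is verifying that the strands piercing each $D_i$ genuinely come in oppositely oriented pairs. This is exactly where the orientation-preserving choice of twist regions in \reffig{TwistOrientation} is used, together with the fact that $K_T$ is a single knot so that its orientation is globally consistent. I expect this to be a matter of tracing orientations through \reffig{TwistCurves} rather than a conceptual difficulty; the natural pitfall is a sign or bookkeeping error, so the care needed is simply to confirm that each $C_i$ sees equal numbers of up-oriented and down-oriented strands of the knot.
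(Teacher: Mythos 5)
Your proposal is correct and takes essentially the same approach as the paper: both compute $\operatorname{lk}(C_i,\cdot)$ as the signed intersection of the knot with the disc bounded by $C_i$, and use the orientation-preserving insertion of twist regions (\reffig{TwistOrientation}) to conclude that each of the six parallel strands pierces the disc once in each direction, so the signed contributions cancel. Two minor points of difference: in the paper the four extra strands of the second template are \emph{disjoint} from each disc (rather than meeting it in anti-parallel pairs, as you suggest), and the paper also verifies explicitly that the Dehn twists along $u$, $v$, $w$ producing $K_T$ from $K$ do not change the orientations of strands through the disc --- neither point affects your conclusion.
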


\begin{proof}
Consider the original template. Each curve $C_i$ bounds a disc in $S^3$ that is disjoint from all the curves of the original template, except meets each of the six parallel strands running over the handles of $H_1$ exactly twice. Orient these strands in the same direction, as in the construction of $K$ and $K^{*}$. Note each strand runs through the disc in one direction, then runs through again in the opposite direction, so each such strand has linking number zero with $C_i$. When we modified the template above to create the knot, in Subsection~\ref{Sec:SubConstruction}, we first inserted crossings at vertices. This did not affect orientation of strands through the $C_i$. We then added bigons to crossings on $F$, chosen in such a way that the orientation of the six strands around handles is not affected, as in \reffig{TwistOrientation}. Finally we twisted along $u$, $v$, and $w$. Again this had no effect on orientation of strands. Thus the linking number of $K_T$ or $K_T^{*}$ with any $C_i$ is zero.
\end{proof}

For each $C_i$, select an  integer $t_i\geq 7$, and perform $t_i$ full twists along $C_i$. That is, drill $C_i$ and perform $1/t_i$ Dehn filling. All choices of $m$ and all choices of integers $t_i\geq 7$ give a countable collection of knots in $S^3$. Denote the sequence of knots obtained from twisting $K_T$ by $\{K_n\}$, $n\in\NN$. Denote the sequence obtained from $K_T^{*}$ by $\{K_n^{*}\}$, $n\in\NN$. 

\begin{lemma}\label{Lem:WGA}
Any knot $K_T$, $K_T^{*}$, $K_n$, and $K_n^{*}$ is a weakly generalized alternating knot (WGA), with representativity at least six and edge-representativity at least four.
Furthermore, for every $n$,  the crossing number on $F$, $c_F(\pi(K_n))$, is bounded, equal to $c_F(\pi(K_T))=c_F(\pi(K))$, and $c_F(\pi(K_n^{*})) = c_F(\pi(K_T^{*}))=c_F(\pi(K^{*}))$. Similarly, the diagram is twist reduced, with bounded twist number on $F$.

Finally, each $K_n$ and $K_n^{*}$ has linking number zero with each $C_i$.
\end{lemma}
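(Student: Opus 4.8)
The plan is to prove the statement first for the untwisted knots $K_T$ and $K_T^{*}$, where the work is largely assembling earlier lemmas, and then to show that every listed property survives the final twisting along the curves $C_i$ that produces $K_n$ and $K_n^{*}$. Recall that being WGA means having a reduced alternating, checkerboard coloured diagram of representativity at least four, and that reduced alternating means alternating, weakly prime, with each component carrying a crossing. For $K_T$ and $K_T^{*}$ the alternating, weakly prime, checkerboard and twist-reduced conditions are precisely \reflem{KTalternating}; each diagram is that of a knot with crossings, so the single component carries a crossing; and \reflem{Representativity} gives representativity at least six, hence more than four. So $K_T$ and $K_T^{*}$ are WGA with representativity at least six, and are twist reduced. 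For edge-representativity, \reflem{KTalternating} gives $e(\pi(K_T^{*}),F)=4$ directly; for $K_T$ I would rerun the arc analysis of \reflem{K*EdgeRep}, checking that a curve confined to one pants crosses the six parallel strands at least six times while a curve meeting both pants contributes a seam or wave meeting $\pi(K_T)$ at least four times, the one delicate point being to confirm that the six-strand and grid structure leaves no essential curve disjoint from the diagram. Linking number zero with each $C_i$ is \reflem{Linking}.

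The heart of the argument is that twisting along the $C_i$ preserves all of this. Since each $C_i$ is unknotted with linking number zero (\reflem{Linking}), the $1/t_i$ Dehn fillings return $S^3$ with $K_n$ and $K_n^{*}$ remaining knots. The key geometric input, which I would isolate as a separate observation, is that the disc bounded by $C_i$ meets the knot only in the six parallel strands running over the handles, in exactly the crossing-free, coherently parallel configuration already handled for the Dehn twists along $u,v,w$ in \reflem{KTalternating}. Consequently twisting along $C_i$ winds this ribbon of strands without letting them cross one another, so \emph{no new crossings are created on $F$}. From this one fact everything local follows by reusing \reflem{KTalternating}: applying the inverse twist carries the diagram back to $\pi(K_T)$ or $\pi(K_T^{*})$, so weak primeness, the alternating and checkerboard conditions, twist-reducedness, and the topological types of the complementary regions are all inherited, and the crossing and twist numbers are unchanged. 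This gives $c_F(\pi(K_n))=c_F(\pi(K_T))=c_F(\pi(K))$ and $c_F(\pi(K_n^{*}))=c_F(\pi(K_T^{*}))=c_F(\pi(K^{*}))$, with the twist numbers likewise constant, so both stay bounded as $n$ varies; in particular $K_n$ and $K_n^{*}$ are again WGA.

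To finish I would carry the representativity, edge-representativity and linking statements through the twist. For the first two, the wave-counting arguments of \reflem{Representativity} and \reflem{K*EdgeRep} depend only on the number of parallel strands a wave must cross and on the combinatorics of the alternating tangles, neither of which is altered by winding the ribbon along $C_i$; hence every compressing disc still meets the diagram at least six times and every essential curve at least four times, giving representativity at least six and edge-representativity at least four for $K_n$ and $K_n^{*}$. For the linking statement the orientation count of \reflem{Linking} applies verbatim, since each strand still runs through the disc bounded by $C_i$ once in each direction, and twisting along a distinct $C_j$ cannot change this, $C_i$ and $C_j$ being unlinked.

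The step I expect to be the main obstacle is the emphasised claim that twisting along $C_i$ creates no crossings on $F$. This is the precise mechanism that allows the volume to grow while the twist number is held fixed, and it is not formal: a full twist on a bundle of strands generically produces crossings, and it is only the placement of $C_i$ around the six parallel strands together with the linking-zero configuration in which each strand pierces the disc once in each direction that makes the twist act as a coherent ribbon-winding. The cleanest route, which I would pursue, is to reduce the twist along $C_i$ to the already-understood Dehn-twist picture of \reflem{KTalternating}; once that reduction is secure, every remaining assertion follows from the earlier lemmas essentially verbatim.
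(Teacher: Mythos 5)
Your treatment of $K_T$ and $K_T^{*}$ matches the paper's (assembling \reflem{KTalternating}, \reflem{Representativity}, and \reflem{Linking}), but the heart of your argument for $K_n$ and $K_n^{*}$ --- reducing the twist along $C_i$ to the surface Dehn-twist picture of \reflem{KTalternating} --- cannot work as stated, and you have located the difficulty in the wrong place. The curves $u$, $v$, $w$ lie \emph{on} $F$, so twisting along them restricts to a self-homeomorphism of $F$ (a power of a Dehn twist supported in an annulus in $F$), which is what makes the inverse-twist trick of \reflem{KTalternating} available. The curves $C_i$ do not lie on $F$: each bounds a disc that passes through the handles of $H_1$ (meeting the spine of $H_1$ in four points) and meets each of the six strands twice, with \emph{opposite} orientations --- not the coherent one-directional configuration of the $u,v,w$ annuli, so your ``coherent ribbon-winding'' description misreads the setup. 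Consequently $1/t_i$ surgery on $C_i$ is not a self-map of the pair $(S^3,F)$: it re-embeds $H_1$ with a braided spine and replaces the outside handlebody $H_2$ by an a priori different manifold. Your emphasised claim that ``no new crossings are created on $F$'' is then not the real issue --- the diagram is combinatorially unchanged on the \emph{image} surface essentially automatically, since the surface is carried along with the strands --- but representativity is a property of the embedding of $F$ in $S^3$: it depends on which curves bound compressing discs on the $H_2$ side, and nothing in your argument controls that. Re-running the wave counts of \reflem{Representativity}, as you propose, presupposes that $x$, $y$, $z$ still bound discs cutting the outside into balls after the surgery, which is exactly what needs to be proved.

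The paper's proof supplies precisely this missing step. It shows (i) after twisting, the complement of the re-embedded $H_1$ is still a handlebody (it is the complement of a 2-bridge link with an unknotting tunnel drilled out, citing \cite{Koba}), so one still has a genus-two Heegaard splitting of $S^3$; and (ii) this new splitting can be isotoped back to the original one by sliding the attaching discs of the 2-handles of $H_1$ until the inserted 4-string pure braid on the spine is trivialised. Hence the effect of twisting along the $C_i$ is realised by a homeomorphism of $S^3$ preserving both $H_1$ and $H_2$, restricting on $F$ to a map fixing a meridian system and acting as a pure mapping class of a 4-holed sphere. All the conclusions of the lemma then transfer at once: meridian discs of $H_1$ and of $H_2$ go to meridian discs (giving representativity at least six), intersection numbers with essential curves are preserved (edge-representativity), and the diagram remains alternating, checkerboard coloured, weakly prime, and twist reduced with the same crossing and twist numbers on $F$. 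Until you establish analogues of (i) and (ii), your proposal has a genuine gap; your correct instinct that a non-formal mechanism is needed should be redirected from ``the twist creates no crossings'' to ``the twisted Heegaard splitting is standard, so the twisting is ambient and splitting-preserving.''
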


\begin{proof}
We have shown that $\pi(K_T)$ and $\pi(K_T^{*})$ are alternating on $F$, checkerboard coloured, weakly prime, and each component projects with at least one crossing by \reflem{KTalternating}. The representativity of each is at least six by \reflem{Representativity}.
Thus $\pi(K_T)$ and $\pi(K_T^{*})$ are weakly generalized alternating. They are twist reduced with edge-representativity at least four by \reflem{KTalternating}. We now show these properties for $\pi(K_n)$ and $\pi(K_n^*)$.

By choice, $H_1$ is constructed as a 3-ball $B$ with
with two unknotted and unlinked 2-handles. The co-cores of these 2-handles  are meridians of $H_1$. The cores can be joined to a wedge of two circles on which $H_1$ deformation retracts.
Twisting along the curves $C_i$ leaves the co-cores of the 2-handles of $H_1$ fixed and inserts a  4-string pure braid to the wedge of two circles on which $H_1$ retracts.
The complement $H_2$ is still a handlebody after twisting along the curves $C_i$. Indeed it is a complement of a 2-bridge link with an unknotting tunnel drilled out; see for  example \cite{Koba}. Thus after the twisting operation we still get a Heegaard splitting of $S^3$. We can isotope this Heegaard splitting to the original one
by sliding around the discs on $B$, on which the 2-handles of $H_1$
are attached, until the 4-string braid on the spine is trivialized. This implies that the effect of twisting along the curves $C_i$ is also achieved by a homeomorphism of $S^3$ that preserves $H_1$ and $H_2$.
The restriction of the homeomorphism on $F$ fixes a set of meridian curves of $H_1$ and restricts to a pure mapping class of the 4-holed sphere.

Since the property of bounding a meridian disc in $H_1$ (resp.\ $H_2$) is preserved under homeomorphisms of $H_1$ (resp.\ $H_2$) and intersection numbers of curves are preserved under homeomorphisms of $F$, it follows that $\pi(K_n)$ and $\pi(K_n^{*})$ have representativity at least six. Similarly, $\pi(K_n)$ and $\pi(K_n^{*})$ remain alternating on $F$ after twisting along $C_i$, with the same number of crossings on $F$ as before, and the same twist number, and they remain checkerboard colored, twist reduced, and weakly prime on $F$. 

Thus the representativity of $\pi(K_n)$ is identical to the representativity of $\pi(K)$, hence it is at least six. Similarly for $\pi(K_n^{*})$. Similarly, edge-representativity is identical in both cases, hence it is at least four. 
\end{proof}

\begin{proposition}\label{Prop:Hyperbolic}
Each WGA knot $K_T$, $K_T^{*}$, $K_n$, and $K_n^{*}$ has hyperbolic complement in $S^3$.
\end{proposition}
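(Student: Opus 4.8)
The plan is to reduce the proposition to the hyperbolicity criterion of Howie and Purcell and to verify its hypotheses, the only genuinely geometric one being that $S^3 - F$ is atoroidal. By \reflem{WGA}, each of $K_T$, $K_T^{*}$, $K_n$, and $K_n^{*}$ is a WGA knot in $S^3$ whose diagram on $F$ is twist reduced and has representativity $r(\pi(K),F) \geq 6 > 4$. Since $F$ is a standard genus-two Heegaard surface, $S^3 - F$ is the disjoint union of the interiors of two genus-two handlebodies. The fundamental group of a handlebody is free and hence contains no copy of $\ZZ \oplus \ZZ$, so no embedded torus in a handlebody is $\pi_1$-injective; thus each handlebody, and therefore $S^3 - F$, is atoroidal. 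This is precisely the feature of the Heegaard construction that makes the argument go through.

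For the starred knots $K_T^{*}$ and $K_n^{*}$, \reflem{KTalternating} (together with \reflem{WGA}) guarantees that every complementary region of the diagram on $F$ is a disc. All hypotheses of \refthm{HP} are then met, so $S^3 - K_T^{*}$ and $S^3 - K_n^{*}$ are hyperbolic. For the unstarred knots $K_T$ and $K_n$ a single complementary region is a $3$-punctured sphere, so \refthm{HP} does not apply verbatim. Here I would invoke the hyperbolicity conclusion of \cite[Theorem~1.1]{HowiePurcell} directly, since, as noted, that conclusion does not require the complementary regions to be discs and its remaining hypotheses have all been checked above. To make the non-disc region explicit, one can instead run Thurston's hyperbolization by hand: the complement is irreducible and boundary irreducible by \cite[Corollary~3.16]{HowiePurcell}; an essential annulus would force a single twist region by \refthm{HP2}(2), which the templates (with many twist regions) do not have; and by \refthm{HP2}(1) an essential torus would be cut by the checkerboard surfaces into annuli whose boundary curves each lie in a single complementary region. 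Boundary curves in disc regions are inessential and may be removed, so any essential torus would meet only the unique $3$-punctured sphere region, and a curve-tracing argument there, using the template structure and $r(\pi(K),F)\geq 6$, excludes such a torus. With essential spheres, discs, annuli, and tori all ruled out, Thurston's theorem provides the hyperbolic structure.

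The main obstacle is the unstarred case: the single $3$-punctured sphere region blocks the direct use of \refthm{HP}, so one must either appeal to the more general hyperbolicity statement in \cite{HowiePurcell} or carry out the torus analysis of \refthm{HP2} and confirm that this lone non-disc region cannot carry an essential torus. By contrast, the atoroidality of $S^3 - F$ and the exclusion of essential annuli from the twist count are routine once \reflem{WGA} is available.
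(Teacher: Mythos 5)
Your treatment of the starred knots matches the paper: \reflem{WGA} supplies the WGA condition, twist-reducedness, representativity at least six and disc regions, and \refthm{HP} then gives hyperbolicity of $S^3-K_T^{*}$ and $S^3-K_n^{*}$; your observation that $S^3-F$ is atoroidal because handlebodies have free fundamental group (no $\ZZ\oplus\ZZ$ subgroup) correctly fills in a hypothesis the paper leaves implicit. The trouble is the unstarred case. Your primary route---invoking the hyperbolicity conclusion of \cite[Theorem~1.1]{HowiePurcell} ``directly,'' on the grounds that the disc hypothesis is not needed---is precisely what the paper declines to do: as stated and as quoted in \refthm{HP}, that theorem carries the hypothesis that all regions of $F-\pi(K)$ are discs, and the paper's remark that the disc condition is unnecessary for hyperbolicity is something the paper \emph{substantiates} by giving the argument itself in \refprop{Hyperbolic}, not something it cites. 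So the citation begs the question, and the weight falls on your fallback argument.

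That fallback has the right skeleton (irreducibility and boundary-irreducibility from \cite[Corollary~3.16]{HowiePurcell}, anannularity from \refthm{HP2}(2) since there are many twist regions, annuli $A_i$ from \refthm{HP2}(1) with boundary pushed into the $3$-punctured sphere region $E$), but it omits the one nontrivial step: your ``curve-tracing argument there, using the template structure and $r(\pi(K),F)\geq 6$'' is exactly the content that must be proved, and representativity is not the right tool for it. The paper's argument runs: the three components of $\bdy E$ represent distinct free homotopy classes in the handlebody $H_1$, so no two of them cobound an annulus in $H_1$; hence each $A_i$ has both boundary curves parallel to a \emph{single} component of $\bdy E$ and cobounds a solid torus with an annulus in $F$; then either $A_i$ can be isotoped off $F$, or $J$ lies inside that solid torus---and the latter is excluded because it would yield an essential curve on $F$ disjoint from $\pi(J)$, forcing edge-representativity zero, contradicting the bound of at least four in \reflem{WGA}. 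Note this uses \emph{edge}-representativity, which controls all essential curves on $F$, whereas $r(\pi(K),F)\geq 6$ only constrains boundaries of compressing discs. Once all intersections $T\cap F$ are removed, $T$ lies in a handlebody and is compressible, a contradiction. Without this analysis your proof of atoroidality, and hence of hyperbolicity of $K_T$ and $K_n$, is incomplete.
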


\begin{proof}
For $K_T^{*}$ and $K_n^{*}$, the result follows immediately from \refthm{HP}: These knots are WGA in $S^3$, all regions on $F-\pi(K_T^{*})$ and $F-\pi(K_n^{*})$ are discs, and the representativity is at least six, by Lemma~\ref{Lem:WGA}. Thus by that theorem, these knots are hyperbolic.

The result is not quite as straightforward for $K_T$ and $K_n$, because these knots have one region $E$ that is a 3-holed sphere, not a disc. For these knots, we show their complements are irreducible, boundary irreducible, anannular and atoroidal. For notational simplicity we will use $S^3- J$ to denote any of these manifolds.

We recall that ~\cite[Corollary~3.16]{HowiePurcell} states that if $\pi(L)$ is a reduced alternating link projection on a surface $F$ in a 3-manifold $Y$, them $Y-L$ is is irreducible and boundary irreducible.
Hence the claim that $S^3-J$ is irreducible and boundary irreducible follows from the fact that the knot is WGA.

By construction, $J$ has a WGA knot diagram $\pi(J)$ that has more than one twist region.
Thus part (2) of Theorem~\ref{Thm:HP2} implies that $S^3-J$  cannot contain an essential annulus.

Next we claim that $S^3-J$ is atoroidal. To prove it, suppose that $T$ is an essential torus in $S^3-J$. By part~(1) of \refthm{HP2}, the checkerboard surfaces of $\pi(J)$ cut $T$ into annuli $A_1, \dots, A_j$, such that each boundary component of each $A_i$ lies entirely in a single region of $F-\pi(J)$. Such a component cannot bound a disc (else use the fact that $T$ is incompressible and $S^3-J$ is irreducible to remove such an intersection), thus it must lie on a non-disc face of $F-J$. In our case there is only one non-disc face, namely the 3-holed sphere $E$. The three components of $\bdy E$ represent distinct free homotopy classes in $H_1$. Thus no two of these components can co-bound an annulus in $H_1$.

It follows that the components $\partial A_i$ must be parallel to a single component of $\partial E$ and parallel to each other. Then $A_i$ is an annulus in a handlebody with parallel boundary components on the boundary of the handlebody. Thus $A_i$ co-bounds a solid torus in the handlebody with an annulus on the boundary, and either $A_i$ can be isotoped away from $F$, or $J$ lies inside that solid torus. However by Lemma~\ref{Lem:WGA}, $J$ has edge-representativity at least four.
If $J$ were contained in a solid torus parallel to an annulus  on the boundary of $H_1$, then there would be an essential curve on $\bdy H_1$ disjoint from that annulus, implying the edge-representativity is zero. This is impossible. Thus $J$ cannot lie entirely within a solid torus defined by $A_i$, and so we may isotope $T$ to avoid the intersections of $\bdy A_i$. Repeating this argument, all the intersections $T\cap F$ can be eliminated by isotoping $T$. But then $T$ is inessential in $S^3-J$, a contradiction.
\end{proof}

\section{Proofs of main results}\label{Sec:Cusps}

This section completes the proofs of the volume result \refthm{MainVolume}, and its consequence for cusp densities, \refcor{Cusps}.
We will prove \refthm{MainVolume} by showing that the volumes of the knots $\{K_n\}$ or $\{K_n^{*}\}$ approach the volumes of another family $\{L_m\}$ with the property that $\vol(L_m)\to  \infty$ as $m\to \infty$.

Let $J$ be any knot in $\{K_n\}_{n\in \NN} \cup \{K^{*}_n\}_{n\in \NN}$ and let $L_m=J \cup (\bigcup_{i=1}^m C_{2i-1} \cup C_{2i})$, where  $C_1, C_2, \dots, C_{2m-1}, C_{2m}$  are the curves shown in \reffig{TwistCurves} that produce $J$. 
We will show that the links $L_m$ are hyperbolic. To do so, we use the following result of Gabai \cite[Corollary~2.4]{Gabai}. 

\begin{proposition}[Gabai]\label{Prop:Gabai}
Let $M$ be a Haken 3-manifold with toroidal boundary. If $S$ is a closed surface in $M$ that is not a boundary parallel torus, and such that $S$ is a Thurston norm minimizer in $H_2(M, \partial M)$,  then  $S$ remains norm minimizing in all but at most one of the 3-manifolds obtain by Dehn filling a along a single component of $\partial M$. 
In particular, $S$ remains incompressible  in all but at most one of the 3-manifolds obtain by Dehn filling along a single component of $\partial M$. 
\qed
\end{proposition}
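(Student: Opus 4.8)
The natural route to this statement is through Gabai's theory of taut foliations and sutured manifold hierarchies, of which it is a corollary. The first move is to translate the hypothesis into sutured-manifold language. Because $S$ is a Thurston norm minimizer in $H_2(M,\partial M)$ and is not a boundary-parallel torus, decomposing $M$ along $S$ yields a \emph{taut} sutured manifold $(M',\gamma)$: it is irreducible, and its decomposing surface realizes the norm. Here the torus $T\subset\partial M$ that we intend to fill persists as a toral suture of $\gamma$. The whole statement then reduces to showing that tautness of this structure---equivalently, the norm-minimizing property of $S$---survives Dehn filling of $T$ for all but at most one slope.

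From here the plan proceeds in four steps. First, invoke Gabai's existence theorem to build a taut sutured manifold hierarchy for $(M',\gamma)$ terminating in product sutured manifolds; equivalently, produce a taut co-oriented foliation $\mathcal{F}$ of $M$ having $S$ as a leaf. Second, restrict $\mathcal{F}$ (or the boundary pattern of the hierarchy) to $T$: this singles out a well-defined slope $\delta$ on $T$, the \emph{degeneracy slope}, arising from the one-dimensional foliation that $\mathcal{F}$ induces on $T$. Third, for each filling slope $\alpha\neq\delta$, glue a suitably foliated solid torus into the filling so that $\mathcal{F}$ extends to a taut foliation of the filled manifold $M(\alpha)$ still containing $S$ as a leaf; by the Gabai--Thurston duality between taut foliations and the Thurston norm, this certifies that $S$ remains norm-minimizing in $M(\alpha)$. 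Fourth, since $\delta$ is a single slope, all but at most one filling preserves norm-minimization, and a norm-minimizing surface is automatically incompressible---a compression would strictly lower the norm---which supplies the ``in particular'' clause.

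The main obstacle is concentrated in the third step: controlling precisely how the taut structure can fail to extend across the filling solid torus, and proving that it fails for at most the single slope $\delta$. This is the technical heart of Gabai's program---that a compatibly foliated (or sutured) solid torus can be inserted for every slope except the degeneracy slope---and reproving it from scratch would amount to redeveloping that machinery. For the purposes of this paper it is therefore cleanest to cite \cite{Gabai} directly, with the sketch above recording why exactly one exceptional slope can appear.
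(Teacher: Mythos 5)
Your proposal matches the paper's treatment exactly: the paper gives no proof of this proposition at all, stating it with a \qed{} and citing it as Corollary~2.4 of \cite{Gabai}, and your sketch is a faithful outline of Gabai's actual argument (sutured decomposition along $S$, hierarchy/taut foliation with $S$ as a leaf, the degeneracy slope on $T$, extension of the foliation over all non-degenerate fillings, and the Gabai--Thurston norm-minimization of compact leaves), so citing \cite{Gabai} directly, as you conclude, is precisely what the paper does. One caution about your final step: the justification ``a compression would strictly lower the norm'' fails when $S$ is a torus, since compressing a torus produces a sphere and $\chi_{-}$ is unchanged (both are $0$) --- and the torus case is exactly the one this paper needs, applying the proposition to an essential torus $T_1$ with $\chi(T_1)=0$; there incompressibility follows instead from $S$ being a leaf of the extended taut foliation (Novikov's theorem), which is how Gabai's statement delivers the ``in particular'' clause.
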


Recall the definition of the Thurston norm. If $S$ is a connected closed surface, then define $\chi_{-}(S)$ to be the negative of the Euler characteristic of $S$, or $-\chi(S)$, if $S$ is not a 2-sphere. If $S$ is a 2-sphere, define $\chi_{-}(S)$ to be zero. If $S$ is not connected, then $\chi_{-}(S)$ is defined to be the sum of the complexities of the components of $S$. Given $S$ as above, let $c$ denote the class of $S$ in $H_2(M, \partial M)$. The \emph{Thurston norm} of $c$ is the minimum complexity $\chi_{-}(S)$
over all surfaces representing $c$. A representative $c=[S]$ that realises the Thurston norm of $c$ is called a \emph{Thurston norm minimizer} in its class.

\begin{proposition}\label{Prop:AugmentedHyperbolic}
The links $\{L_m\}_{m\in \NN}$ are  hyperbolic.
\end{proposition}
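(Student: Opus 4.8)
The plan is to show that $M := S^3 - L_m$ is irreducible, boundary irreducible, atoroidal, and not Seifert fibred, so that it is hyperbolic by Thurston's hyperbolisation theorem for Haken manifolds. The structural fact driving everything is that $S^3 - J$, which is hyperbolic for \emph{every} admissible tuple of twisting parameters $t_i \geq 7$ by \refprop{Hyperbolic}, is obtained from $M$ by $1/t_i$ Dehn filling the cusp of each $C_i$. Since twisting along an unknotted curve is a homeomorphism of its complement, the homeomorphism type of $M$ does not depend on the $t_i$; equivalently, $M$ is homeomorphic to the complement of $K_T$ (or $K_T^{*}$) together with the curves $C_i$ of \reffig{TwistCurves}, and $S^3-K_n$ is recovered by filling. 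This lets me use the fillings, and the freedom in choosing the $t_i$, as a proof tool even though $M$ itself is fixed.

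First I would dispose of reducibility and boundary reducibility. By the proof of \reflem{Linking}, each $C_i$ bounds a disc that is punctured by $J$, so no $C_i$ can lie in a ball missing $J$; as $J$ is a knot and $S^3-J$ is irreducible, $L_m$ is non-split and $M$ is irreducible and boundary irreducible. In particular $M$ is Haken.

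The heart of the argument is atoroidality, and here I would invoke \refprop{Gabai}. Suppose $T\subset M$ is an essential torus. If $T$ separates the components of $L_m$ into two nonempty collections, then $[T]\neq 0$ in $H_2(M,\partial M)$, and as a torus it is automatically a Thurston norm minimiser in its class. Filling the $2m$ cusps $C_i$ one at a time and, at each stage, choosing $t_i$ to avoid the at most one excluded slope permitted by \refprop{Gabai}---possible because infinitely many values $t_i\geq 7$ are available---keeps $T$ incompressible; a homology computation shows its class stays nonzero, so $T$ is not boundary parallel in the resulting $S^3-K_n$. This contradicts the atoroidality of the hyperbolic manifold $S^3-K_n$. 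If instead $T$ does not separate the components, then $T$ bounds a knotted solid torus $W$ containing all of $L_m$ essentially, with $S^3-W$ a nontrivial knot complement; filling the $C_i\subset W$ leaves $K_n$ inside $W$, and one checks that $T$ then persists as an essential torus exhibiting $K_n$ as a satellite, again contradicting hyperbolicity. Hence $M$ is atoroidal.

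Finally, for $m\geq 1$ the manifold $M$ has at least three cusps, so an irreducible, boundary irreducible, atoroidal $M$ cannot be Seifert fibred, since an atoroidal orientable Seifert fibred space with nonempty boundary has at most two boundary tori; in particular $M$ has no essential annulus. Thurston's theorem then gives that $M$ is hyperbolic. I expect the main obstacle to be the torus analysis: verifying the hypotheses of \refprop{Gabai}, and tracking across all $2m$ fillings simultaneously that the homology class, and hence the non-boundary-parallelism, of $T$ survives, together with the separate and more delicate treatment of the non-separating (null-homologous) case via the satellite argument.
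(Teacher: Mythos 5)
Your overall skeleton (irreducible, boundary irreducible, atoroidal, then Thurston, with \refprop{Gabai} plus the Dehn fillings back to the hyperbolic knots of \refprop{Hyperbolic} driving atoroidality) is the paper's, but the atoroidality step contains a genuine error. In a link exterior $M=S^3-L_m$, \emph{every} closed embedded torus $T$ separates (since $H_2(S^3)=0$), and consequently $[T]=0$ in $H_2(M,\partial M)$: the closure of one complementary component of $T$ has boundary $T$ together with some tori of $\partial M$, so in $H_2(M)$ the class $[T]$ equals a sum of boundary classes, all of which die in $H_2(M,\partial M)$. Thus your Case 1 hypothesis ``$T$ separates the components of $L_m$, hence $[T]\neq 0$ in $H_2(M,\partial M)$'' never holds, and the mechanism ``its class stays nonzero, so $T$ is not boundary parallel in $S^3-K_n$'' is vacuous. \refprop{Gabai} does still apply---a torus is a norm minimizer even in the trivial class because $\chi_{-}(T)=0$, which is exactly how the paper invokes it---and it keeps $T$ incompressible in all but one filling slope; but it does \emph{not} exclude that $T$ becomes boundary parallel in the filled manifold $S^3-J(t_1)$. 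Ruling that out is the actual crux, and it is absent from your proposal: the paper shows boundary-parallelism would force $C_1$ to be homotoped onto $\partial N(J(t_1))$, and then uses \reflem{Linking} (linking number zero) to see that $C_1$ is either null-homotopic on that torus or homologous to a longitude; the former contradicts the choice of $C_1$, and the latter is absurd because $C_1$ is unknotted while $J(t_1)$ is not. In your other case, where all of $L_m$ lies in a solid torus $W$ with $T=\partial W$, the phrase ``one checks that $T$ persists as an essential torus'' hides precisely the same issue: after filling, $K_n$ could a priori lie in a ball inside $W$ or be isotopic to the core of $W$, making $T$ compressible or boundary parallel, and nothing in your outline excludes this.

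Two further steps are misstated, though repairable. First, it is not true that every $C_i$ bounds only discs punctured by $J$: by construction, for $i$ even $C_i$ bounds a disc disjoint from $J$ (pierced instead by $C_{i-1}$), so ``$C_i$ bounds a punctured disc, hence cannot lie in a ball missing $J$'' is false as stated, and in any case bounding one punctured disc does not preclude bounding another disc missing $J$; the paper proves irreducibility inductively from the fact that each $C_i$ is homotopically nontrivial in the complement of $J$ and the previously added $C_j$. Second, your claim that an atoroidal orientable Seifert fibred space with nonempty boundary has at most two boundary tori is false: $P\times S^1$, with $P$ a pair of pants, is irreducible, boundary irreducible, and atoroidal (every essential simple closed curve in $P$ is boundary parallel, hence so is every incompressible vertical torus), yet has three torus boundary components. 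So ``at least three cusps'' does not by itself exclude Seifert fibring; what does exclude such models is the absence of essential annuli---$P\times S^1$ contains vertical essential annuli joining distinct boundary tori---and this is the route the paper takes, observing that no components of $L_m$ co-bound an annulus in $S^3-L_m$.
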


\begin{proof}
To see that $S^3-L_m$ is boundary irreducible, note that since $S^3-J$ is irreducible and boundary irreducible (by \refprop{Hyperbolic}), a boundary reducing disc $D$ in $S^3-L_m$ must have its boundary on one of the boundary components corresponding to some $C_i$.
Note that for $i$ odd, $C_i$ does not bound a disc in the complement of $J$. For $i$ even, $C_i$ bounds a disc in the complement of $J$, but this disc is pierced by $C_{i-1}$. Hence no such disc exists,
and $S^3-L_m$  is boundary irreducible.

Now suppose that $S^3-(J\cup C_1)$ is reducible. Then there is a sphere in $S^3-( J\cup C_1)$ that separates $J$ from $C_1$. This is impossible since $C_1$ is not homotopically trivial in $S^3-J$. Similarly suppose
$S^3-( J\cup C_1\cup C_2)$ is reducible. Then, since $S^3-( J \cup C_1)$ is irreducible, there must be a 2-sphere separating $C_2$ from $S^3-( J \cup C_1)$. This is again impossible since $C_2$  is not homotopically trivial in
$S^3-(J\cup C_1)$.
Continuing inductively we conclude that $S^3-L_m$ is irreducible.

Next we argue that  $S^3-L_m$ is atoroidal.
Suppose that there is an essential (i.e.\ incompressible and non-boundary parallel) torus $T_1$ in $M_1=S^3-(J\cup C_1)$. Consider the 3-manifolds  obtained from $M_1$ by $1/t_1$ Dehn filling along $C_1$. 
Each of these 
3-manifolds is the complement of a knot obtained from $J$ by $t_1$ full twists along the disc bounded by $C_1$. Hence for $t_1$ large enough, each of these 3-manifolds
is the complement of a WGA knot in one of the sequences $\{K_n\}$ or $\{K_n^{*}\}$; see discussion before the statement of \reflem{WGA}. Let us denote the corresponding knot by $J(t_1)$. 

Since $\chi(T_1)=0$, it is a Thurston norm minimizer in
$H_2(M_1, \partial M_1)$. Since $M_1$ is the complement of a link, it is Haken. Applying Gabai's \refprop{Gabai} to the essential torus $T_1\subset M_1$, we conclude $T_1$ remains incompressible in all the manifolds $S^3-J(t_1)$, for $t_1>>0$.  On the other hand $S^3-J(t_1)$ is the complement of a hyperbolic WGA knot by \refprop{Hyperbolic}, and as such it cannot contain an essential torus.
The only possibility is that $T_1$ is boundary parallel in $S^3-J(t_1)$. This implies that $C_1$ lies in a neighbourhood $\partial(N(J(t_1)))\times I$ of the torus boundary of the knot complement, and thus it can be homotoped to lie on  $\partial(N(J(t_1)))$.

By \reflem{WGA}, $C_1$ has linking number zero with $J(t_1)$. From the above discussion, it can be homotoped to lie on $\partial(N(J(t_1)))$.
Hence $C_1$ is either homotopically trivial on $\partial(N(J(t_1)))$ or homologous to a longitude of $\partial(N(J(t_1)))$. 
Suppose that $C_1$ is homotopically trivial on  $\partial(N(J(t_1)))$. Then $C_1$ can be made to bound a disc in a 3-ball that is disjoint from $N(J(t_1))$ and from $H_1$. But this contradicts our choice of $C_1$.
Next, suppose $C_1$ homologous to a longitude of $\partial(N(J(t_1)))$,  and thus
 a knot isotopic to $J(t_1)$ in $S^3$.
However this is impossible since $C_1$ bounds a disc in $S^3$ and $J(t_1)$ does not.
We conclude tha  $M_1=S^3-(J\cup C_1)$ is atoroidal.

Inductively, suppose $1<i\leq m$ and $(S^3-J)-(\bigcup_{j=1}^{i-1} C_j)-C_i$ contains an essential torus. Again consider $1/t_i$ Dehn fillings on $C_i$, yielding a manifold $(S^3-J(t_i))-(\bigcup_{j=1}^{i-1} C_j)$ where $J(t_i)$ is one of the WGA knots in $\{K_n\}$ or $\{K_n^{*}\}$. For $t_i>>0$, Gabai's \refprop{Gabai} again implies that the Dehn filling contains an incompressible torus. By induction, this must be boundary parallel in $(S^3-J(t_i))-(\bigcup_{j=1}^{i-1} C_j)$. Thus $C_1$ lies in a tubular neighbourhood of one of the link components $C_j$ or $J(t_i)$. By  linking number considerations as above, it cannot lie in a neighborhood of $J(t_i)$. By construction, it does not lie in a neighborhood of a component $C_j$. This contradiction proves the link is atoroidal. 

Finally note that since no components of $L_n$ co-bound an annulus in $N_n=S^3-L_n$, $N_n$ contains no essential annuli and hence  by Thurston's hyperbolization theorem $N_n$ is hyperbolic.
\end{proof}

We are now ready to prove the first main theorem, \refthm{MainVolume}.  
\begin{theorem}\label{Thm:MainVolume}
There exists a family of hyperbolic knots $\{K_n\}_{n\in \NN}$ in $S^3$ with $\vol(K_n)$ approaching infinity as $n\to\infty$, and such that $K_n$ satisfy the following properties.
\begin{itemize}
\item  $K_n$ has a weakly generalised alternating projection onto a Heegaard surface $F$ of genus two.
\item The representativity $r(\pi(K_n), F)$ is at least six.
\item The crossing number $c_F(\pi(K_n))=c$ on $F$ is the same for all $n$, and the twist number $t_F(\pi(K_n))=t$ on $F$ is the same for all $n$.
\item Furthermore, we can take the knots $K_n$ so that all the regions of the projections $\pi(K_n)$ on $F$ are discs, and the edge-representativity is strictly greater than two. Alternatively, we can take the $K_n$ to have a region that is not a disc.
\end{itemize}
In particular, neither the crossing number $c_F(\pi(K))$ nor the number of twist regions $t_F(\pi(K))$ on $F$ can give an upper bound on volume for a weakly generalised alternating link.
\end{theorem}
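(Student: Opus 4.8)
The plan is to assemble the diagrammatic and topological properties of the knots $K_n$ and $K_n^{*}$ already established in \refsec{Construction}, and then to control their volumes by realising them as Dehn fillings of the hyperbolic links $L_m$. Everything except the volume growth is in hand: by \reflem{WGA} each $K_n$ and $K_n^{*}$ is weakly generalised alternating on the genus-two Heegaard surface $F$, with representativity at least six, and with crossing number and twist number on $F$ equal to those of $K$ (respectively $K^{*}$) and hence independent of $n$; by \refprop{Hyperbolic} each complement is hyperbolic; and the edge-representativity is four for the $K_n^{*}$, whose complementary regions are all discs, while the $K_n$ supply the alternative family possessing a non-disc region. So the only substantive point left is the assertion $\vol(K_n)\to\infty$.

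For this I would fix $m$ and recall that, by construction, each knot $J$ in $\{K_n\}$ (or $\{K_n^{*}\}$) is obtained from $L_m=J\cup\bigcup_{i=1}^m(C_{2i-1}\cup C_{2i})$ by performing $1/t_i$ Dehn filling along every $C_i$, where the integers $t_i\geq 7$ may be taken as large as we please. Since $L_m$ is hyperbolic by \refprop{AugmentedHyperbolic}, Thurston's hyperbolic Dehn surgery theorem applies: as $\min_i t_i\to\infty$ the filled manifolds $S^3-J$ converge geometrically to $S^3-L_m$, so their volumes converge to $\vol(L_m)$, and moreover each filling strictly decreases volume, giving $\vol(S^3-J)<\vol(S^3-L_m)$. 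Thus for every $\varepsilon>0$ there is a choice of the $t_i$ producing a knot in our family with volume within $\varepsilon$ of $\vol(L_m)$.

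The crux, and the step I expect to be the main obstacle, is to show that $\vol(L_m)\to\infty$ as $m\to\infty$. Mere monotonicity of volume under drilling the extra circles is not enough, since a bounded increasing sequence need not diverge, so I need a lower bound that genuinely grows with $m$. I would obtain one from an essential-surface argument of Agol--Storm--Thurston type, the same mechanism underlying the coefficient $\tfrac{\voct}{2}$ in \refthm{HP}: each pair $(C_{2i-1},C_{2i})$ contributes an essential punctured-disc (augmentation) surface, and because distinct pairs are disjoint and mutually unlinked, the guts of the associated essential surface in $S^3-L_m$ should have $|\chi|$ growing at least linearly in $m$, forcing $\vol(L_m)\geq c\,m$ for a fixed $c>0$. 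The delicate part is verifying that these augmentation surfaces really are essential and that their guts complexity accumulates, rather than being absorbed into a single bounded piece; this is exactly where the facts that the $C_i$ are essential (each $C_{2i-1}$ does not bound a disc in $S^3-J$, and each $C_{2i}$ is pierced by $C_{2i-1}$) and have linking number zero with $J$ by \reflem{Linking} are used.

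Combining the two halves finishes the proof. Given divergent lower bounds $\vol(L_m)\to\infty$, choose for each $m$ a vector $(t_i)$ so large that the resulting knot $K_{n(m)}$ satisfies $\vol(K_{n(m)})\geq \vol(L_m)-1$; this yields a family of volume tending to infinity while the crossing number $c_F$ and the twist number $t_F$ on $F$ stay fixed. Since these invariants are constant along a family of unbounded volume, no function of $c_F(\pi(K))$ or of $t_F(\pi(K))$ can bound $\vol(S^3-K)$ from above, which is the concluding assertion.
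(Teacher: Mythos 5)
Your reduction of the theorem to the single claim $\vol(S^3-L_m)\to\infty$ is exactly right, and everything you assemble around it matches the paper: the diagrammatic properties and constancy of $c_F$ and $t_F$ come from \reflem{WGA}, hyperbolicity of the knots from \refprop{Hyperbolic}, hyperbolicity of $L_m$ from \refprop{AugmentedHyperbolic}, and Thurston's hyperbolic Dehn filling theorem lets you choose the twisting parameters $t_i$ so that $\vol(S^3-J)$ is as close to $\vol(S^3-L_m)$ as desired (from below). But at the step you yourself flag as the crux, your proposal has a genuine gap: you do not prove that $\vol(L_m)$ diverges, you only sketch an Agol--Storm--Thurston-type strategy (``the guts of the associated essential surface \emph{should} have $|\chi|$ growing at least linearly in $m$'') and defer precisely the hard points --- essentiality of the punctured-disc surfaces bounded by the $C_i$ and accumulation of guts across the $m$ pairs. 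Note also that the $C_i$ here are not crossing circles of twist regions of $K$: each bounds a disc meeting the six parallel strands twelve times (\reflem{Linking}), so these are not the thrice-punctured spheres for which essentiality and guts contributions are standard; carrying out your sketch would be substantial new work.

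The missing idea is much simpler, and it is how the paper closes the argument: once $S^3-L_m$ is known to be hyperbolic, it has $2m+1$ cusps, and by Adams's theorem an $n$-cusped hyperbolic $3$-manifold has volume at least $n\,v_3$, where $v_3=\vtet$ is the volume of a regular ideal tetrahedron. Hence $\vol(S^3-L_m)\geq (2m+1)\,v_3$, which diverges with $m$, and your Dehn filling step then produces, for any $V>0$, infinitely many knots in the family with volume exceeding $V$. No essential-surface or guts analysis is needed at all. Incidentally, the paper's final section confirms that a version of your instinct can be made to work --- the authors' first approach used a generalised fully augmented link (adding the curves $u$, $v$, $w$ and crossing circles for the twist regions) together with the volume bound of \cite[Theorem~4.2]{Purcell:multiplytwisted} to get a linear-in-$m$ lower bound --- but that route requires a separate hyperbolicity proof for the augmented link and quotes an existing volume estimate rather than a from-scratch guts computation, so as written your proposal is incomplete exactly where it matters.
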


Observe that \refthm{MainVolume} immediately implies \refthm{MainVolumeIntro}.

\begin{proof}[Proof of \refthm{MainVolume}]
Fix $V>0$. We will show there exist infinitely many WGA knots with volume strictly greater than $V$. First, choose an integer $m>0$ such that
$2m\cdot v_3 > V$.

To produce the examples with disc regions and edge-representativity strictly greater than two, consider a knot $J$ in $\{K_n^{*}\}$ obtained by Dehn filling $2m$ cusps of the $2m+1$ cusped manifold
$M=(S^3-K_T^{*})-(\bigcup_{i=1}^m C_{2i-1} \cup C_{2i})$. These have generalised diagrams whose complementary regions on $F$ are all discs. We argue that infinitely many such $J$ have volume larger than $V$.
The link complement $M$ is hyperbolic by \refprop{AugmentedHyperbolic}. Because it has $2m+1$ cusps, by Adams~\cite[Theorem 2.5]{Adams1},
\[ \vol(M) \geq (2m + 1)\, v_3 > (2m)\,v_3 >V. \]

Recall that $J$ is obtained by filling all the cusps of $M$.
For  $i=1, \dots, 2m$, let $t_i$ be such that $J$ is obtained by $1/ t_i$ Dehn filling on the component $C_i$.
Thurston's hyperbolic Dehn filling theorem~\cite{thurston:notes} implies that as the Dehn filling coefficients $t_i$ approach infinity, the volume of $S^3-J$ approaches $\vol(M)$ from below. In particular, for infinitely many such knots, the volume of the knot complement is strictly greater than $2m\cdot v_3>V$, as required.

Finally, to produce examples with a complementary region on $F$ that is not a disc, run the same argument as above on knots in $\{K_n\}$. 
\end{proof}

This leads immediately to the proof of \refcor{Cusps}: that there exist weakly generalised hyperbolic knots in $S^3$ with arbitrarily small cusp density. 
 
\begin{proof}[Proof of \refcor{Cusps}]
Let $\{K_n^{*}\}_{n\in \NN}$ be the family of knots of \refthm{MainVolume} with disc regions. Let $J$ be any knot in this family.
By \refthm{HP}, the checkerboard surfaces $S_1$, $S_2$ corresponding to the WGA knot $\pi(J)$ are essential in $S^3-J$. Let $i(\bdy S_1, \bdy S_2)$ denote the minimal intersection number of $\bdy S_1, \bdy S_2$ on the torus boundary of the maximal cusp of $K^{*}_n$.
So $S_1, S_2$ are checkerboard surfaces of an alternating  knot projection on a Heegaard surface $F\subset S^3$
and such that all the regions of the projection are discs.  For such a situation the intersection number $i(\bdy S_1, \bdy S_2)$ and the quantity
$|\chi(S_1)| + |\chi(S_2)|$ are computed in the proof of \cite[Theorem~1.2]{BuKa}, where it its found that
\[ |\chi(S_1)| + |\chi(S_2)| = c_F(\pi(K_n^{*}))-\chi(F)=c_F(\pi(K_n^{*}))+2 \  \ {\rm and} \ \ 
i(\bdy S_1, \bdy S_2)=2c_F(\pi(K_n^{*})). \]
By \refthm{BK}, we obtain
\[ {\rm CV}(K_n^{*})\leq \dfrac{9}{2}\, c_F(\pi(K_n^{*}))\ \left(1+{{2}\over {c_F(\pi(K_n^{*})}}\right)^2. \]
By \reflem{WGA}, $c_F(\pi(K_n^{*}))=c_F(\pi(K^{*}))$, and hence crossing number on $F$ is bounded independently of $n$. Thus the cusp volume of $K_n^{*}$ is uniformly bounded from above.
On the other hand, by \refthm{MainVolume} we have $\vol(K_n^{*})\to \infty$ as $n\to \infty$.
Since the cusp density is by definition the quotient of ${\rm CV}(K_n^{*})$ by  $\vol(K_n^{*})$, the result follows.
\end{proof}

\section{Links in thickened surfaces}\label{Sec:Thickened}

The goal of this section is to prove \refthm{SxIUpperBound} and \refcor{TorAltUpperBound}.

\begin{proof}[Proof of \refthm{SxIUpperBound}]
Let $Y=S\times[-1,1]$, $F=S\times\{0\}$, and $K$ a WGA link as in the statement of the theorem. 
The fact that $Y-K$ is hyperbolic follows from \refthm{HP}: $Y-F$ is atoroidal, all regions are discs by hypothesis, and $F$ is incompressible, so the representativity is infinite. Thus the hypotheses of \refthm{HP} hold and the link is hyperbolic. Recall that when the genus of $S$ is at least two, there are infinitely many hyperbolic structures, due to Thurston~\cite{thurston:bulletin}. We choose the hyperbolic structure to be such that the higher genus boundary components are totally geodesic, as in \cite{AdamsCalderonMayer, HowiePurcell}; we refer to those papers for further details. 

The lower volume bounds are given by \refthm{HP}: note $\bdy Y$ consists of two copies of $F$, thus the terms $-\chi(F)-\chi(\bdy Y)$ become $-3\chi(F)$. When $F$ is a torus, this is zero.
  
We obtain upper volume bounds similarly to \cite[Appendix]{lackenby:alt-volume}, by forming a fully augmented link; see also \cite{Purcellsurvey}. In particular, augment each twist region on $F$ by encircling it with a crossing circle, which bounds a 2-punctured disc punctured by $K$. Untwist along each crossing circle: remove all pairs of crossings from each twist region. This is done by a homeomorphism of the complement of $K$ and the crossing circles. Let $L$ denote the resulting link. It consists of crossing circle components, bounding 3-punctured spheres meeting $F$ transversely, and components that came from $K$. Components from $K$ are embedded on $F$, except possibly in a neighbourhood of each crossing circle, where they may have a single crossing. 

As in \cite[Appendix]{lackenby:alt-volume}, the fully augmented link on $F$ in $S\times[-1,1]$ can be decomposed into two geometric pieces, with totally geodesic boundary. The decomposition is obtained by cutting along two types of surfaces. The first surface consists of all the 2-punctured discs bounded by crossing circles. These are totally geodesic~\cite{Adams:Thrice}. Shade them. They divide $F-L$ into regions that will form the second surface. As in the case of a fully augmented link in $S^3$, a reflection through the projection surface takes $Y-L$ to a fully augmented link. The reflection reverses each single crossing, but following the reflection by a full-twist homeomorphism about the corresponding crossing circle takes the augmented link diagram back to itself. Thus there is an orientation-reversing self-homeomorphism of $Y-L$. This fixes pointwise each of the regions of $F-L$ away from crossing discs. It follows from Mostow--Prasad rigidity that these white regions glue to a white surface that is totally geodesic in $Y-L$. 

Cut along the white surface and the shaded discs. These give a decomposition of $Y-L$ into two isometric pieces, each homeomorpic to $S\times[0,1]$, with $S\times\{0\}$ marked by white and shaded faces meeting at ideal vertices. The shaded faces, coming from 2-punctured discs, are triangular. The white faces come from regions of $F-L$. White and shaded faces meet at right angles. 

Consider first the case that $S=T^2$, a torus. Then $T^2\times\{1\}$ is realised as a toroidal cusp of $Y-L$. 
Cone each face on $T^2\times\{0\}$ to the cusp $T^2\times\{1\}$ to produce an ideal pyramid. The cone over a shaded face is a tetrahedron, one for each shaded face on each copy of $T^2\times[0,1]$. Two pyramids over a white face glue together to form a bipyramid; perform stellar subdivision of each white bipyramid. That is, subdivide a bipyramid into tetrahedra by removing the white face, taking an infinite geodesic dual to that face, and adding in triangles to divide the region into tetrahedra. If the white face has $d$ edges, the resulting subdivision gives $d$ tetrahedra; this subdivision is exactly as in \cite[Appendix]{lackenby:alt-volume}. 

The count is also identical to that paper: Each crossing circle becomes four shaded triangles, giving rise to four tetrahedra. Each edge on $T^2\times[0,1]$ that borders a white face gives rise to one tetrahedron, for both copies of $T^2\times[0,1]$. The subdivision gives three edges per crossing circle, each appearing twice on $T^2 \times[0,1]$. This gives a total of six additional tetrahedra per crossing circle.

Thus we may subdivide the augmented link on the torus into $10\, t_F(\pi(K))$ tetrahedra. The maximum volume of a tetrahedron is $\vtet = 1.01494\dots$. Thus
\[ \vol(Y-L) \leq 10\,\vtet\cdot t_F(\pi(K)). \]

For the case $Y=S\times[-1,1]$ with $S$ of higher genus, the argument is similar, but rather than coning to an ideal vertex at $S\times\{1\}$, we cone white and shaded faces to \emph{ultra-ideal} vertices coming from the totally geodesic surfaces at $S\times\{1\}$, as in \cite{AdamsCalderonMayer}. These are also referred to as hyperideal vertices, or truncated vertices, in the literature. Briefly, viewing $\HH^3$ in the Klein model, an ultra-ideal vertex lies outside the boundary at infinity. It defines a unique circle on the boundary at infinity, bounding a totally geodesic surface in $\HH^3$, and we cut this off to form the truncated, ultra-ideal vertex. A \emph{generalised tetrahedron} has at least one ultra-ideal vertex; topologically the tetrahedron is truncated at each ultra-ideal vertex. Geometrically, the totally geodesic faces of the tetrahedron meet a totally geodesic truncation face at right angles. 
See also \cite{Ushijima} for a more detailed discussion.

In our situation, in the case of a white face, coning to an ultra-ideal vertex gives a truncated pyramid over the white face as a base. Gluing two white faces together produces a truncated bipyramid, with two ultra-ideal vertices, and an ideal $d$-gon corresponding to the white face at the centre. As above, stellar subdivision divides these into generalised tetrahedra with two ideal vertices and two ultra-ideal vertices. 

For each shaded face, take a truncated tetrahedron, with three ideal vertices and one ultra-ideal vertex. For ease of stating the bound, glue two shaded faces into a bipyramid over a triangle with two ultra-ideal vertices, and then perform stellar subdivision. Now each pair of shaded faces is divided into three generalised tetrahedra with two ideal vertices and two ultra-ideal vertices.

We count the number of generalised tetrahedra. Each crossing circle becomes four shaded triangles, giving rise to six generalised tetrahedra after gluing them in pairs and performing stellar subdivision. Each white face is bordered by some number of edges; each such edge gives rise to one generalised tetrahedron after stellation. Our decomposition into white and shaded faces above gives rise to three edges per crossing circle, each appearing twice as an edge of a white face. This gives a total of six additional generalised tetrahedra per crossing circle, or twelve total per crossing circle. (Observe that the count of tetrahedra meeting a white face is identical to the count above in the case of the torus, which is identical to the count in \cite[Appendix]{lackenby:alt-volume}. The only difference is that now we count generalised tetrahedra.)

By Adams, Calderon, and Mayer~\cite[Corollary~3.4]{AdamsCalderonMayer}, the maximum volume of a generalised tetrahedron with two ideal vertices and two ultra-ideal vertices is $\voct/2$, where $\voct=3.66386\dots$ is the volume of a regular ideal octahedron. There are $t_F(\pi(K))$ crossing circles. Thus
\[ \vol(Y-L) \leq \frac{\voct}{2} \cdot 12\, t_F(\pi(K)) = 6\voct\, t_F(\pi(K)).\]

The link $K$ is obtained from $L$ by Dehn filling, and volume strictly decreases under Dehn filling~\cite{thurston:notes}. Thus $\vol(Y-K) <\vol(Y-L)$, giving the result. 
\end{proof}

Theorem~\ref{Thm:MainVolume} required a generalised projection surface of genus two. A similar construction could likely be made for a higher genus surface to give unbounded volume. However, the same result will not hold for a projection surface that is a torus. In particular, we now argue that there \emph{is} an upper bound on volume in terms of $t_F(\pi(K))$ for a WGA knot $K$ on a Heegaard torus in $S^3$, or indeed in any lens space.

\begin{corollary}\label{Cor:TorAltUpperBound}
Suppose $K$ is a link that has a weakly generalised alternating projection $\pi(K)$ to a Heegaard torus $F$ in $Y=S^3$, or in $Y=L(p,q)$ a lens space. Suppose that $\pi(K)$ is twist reduced, the regions of $F-\pi(K)$ are discs, and the representativity satisfies $r(\pi(K),F)>4$. Then $Y-K$ is hyperbolic, and
\[  {\voct\over 2}\  t_F(\pi(K)) \leq \vol(Y-K) < 10\,\vtet\cdot t_F(\pi(K)).\]
\end{corollary}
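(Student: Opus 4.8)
The plan is to deduce Corollary~\ref{Cor:TorAltUpperBound} from \refthm{SxIUpperBound} by passing from the Heegaard torus $F$ in $Y$ to the thickened torus $T^2\times[-1,1]$. A Heegaard torus $F$ in $S^3$ (or in a lens space $L(p,q)$) bounds a solid torus on each side. The essential observation is that drilling out the cores of these two solid tori converts $Y$ into a manifold homeomorphic to $T^2\times[-1,1]$, with $F$ corresponding to $T^2\times\{0\}$, and the two new torus cusps corresponding to the cores. Concretely, I would let $\gamma_1,\gamma_2$ denote the core curves of the two complementary solid tori and set $K'=K\cup\gamma_1\cup\gamma_2$. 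Since the regions of $F-\pi(K)$ are discs, $F$ has edge-representativity at least three (indeed the hypotheses force $K$ to meet every essential curve of $F$, so the cores $\gamma_1,\gamma_2$ are genuinely drilled out of the complement and are not isotopic into $F$). Thus $Y-K'$ is homeomorphic to the complement of a WGA link projection on $T^2\times\{0\}$ in $T^2\times[-1,1]$, whose complementary regions are discs and whose twist number equals $t_F(\pi(K))$ (drilling the cores adds no crossings and no twist regions).

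Next I would invoke the torus case of \refthm{SxIUpperBound} applied to this thickened-torus link. Since $F$ is incompressible in $T^2\times[-1,1]$ the representativity is infinite, and twist-reducedness is inherited from the hypothesis on $\pi(K)$; hence $Y-K'$ is hyperbolic with
\[ {\voct\over 2}\, t_F(\pi(K)) \leq \vol(Y-K') < 10\,\vtet\cdot t_F(\pi(K)). \]
Here the two boundary tori $T^2\times\{\pm1\}$ of the thickened surface become the two cusps coming from the drilled cores $\gamma_1,\gamma_2$, which is exactly the configuration \refthm{SxIUpperBound} governs.

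Finally I would recover $Y-K$ from $Y-K'$ by Dehn filling the two cusps $\gamma_1,\gamma_2$ back along the slopes that reconstruct the solid tori of the Heegaard splitting. Because volume strictly decreases under Dehn filling~\cite{thurston:notes}, the upper bound $\vol(Y-K)<\vol(Y-K')<10\,\vtet\cdot t_F(\pi(K))$ is immediate. For the lower bound, the hyperbolicity of $Y-K$ follows directly from \refthm{HP} under the stated hypotheses ($Y-F$ atoroidal, all regions discs, $r(\pi(K),F)>4$, twist reduced), and \refthm{HP} also yields $\vol(Y-K)\geq(\voct/2)(t_F(\pi(K))-\chi(F))=(\voct/2)\,t_F(\pi(K))$ since $\chi(F)=0$ for the torus. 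This gives both inequalities.

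The main obstacle I anticipate is verifying carefully that $Y-K'$ really is the complement of a WGA projection on a genuine thickened torus — that is, that drilling the two Heegaard cores produces $T^2\times[-1,1]$ rather than some other Seifert piece, and that the WGA hypotheses (in particular twist-reducedness and the disc condition) transfer cleanly without the drilled cores creating essential annuli or tori that would violate the atoroidality needed for \refthm{SxIUpperBound}. For the lens space case one must additionally confirm that the two complementary solid tori of a genuine Heegaard torus in $L(p,q)$ still yield $T^2\times[-1,1]$ upon drilling their cores; this is a standard fact about genus-one Heegaard splittings of lens spaces, but it is the point where the lens space generality enters and where I would be most careful.
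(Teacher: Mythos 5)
Your proposal matches the paper's proof essentially step for step: the paper likewise drills the core curves of the two complementary solid tori (a Hopf link when $Y=S^3$), identifies the resulting manifold with $T^2\times\RR$, truncated to $T^2\times[-1,1]$, with $K$ a WGA link on $T^2\times\{0\}$, applies \refthm{SxIUpperBound} for the upper bound, recovers $Y-K$ by Dehn filling along the drilled cores (volume strictly decreasing), and obtains hyperbolicity and the lower bound of $Y-K$ directly from \refthm{HP} with $\chi(F)=0$. The only cosmetic difference is your aside on edge-representativity and non-isotopy of the cores into $F$, which the paper's argument does not need since the identification of the drilled complement with a thickened torus is immediate.
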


\begin{proof}
Set $Y$ to be the manifold $S^3$ or $L(p,q)$. By \refthm{HP}, $Y-K$ is hyperbolic. 

The Heegaard torus $F$ bounds solid tori $V_1$ and $V_2$ on either side. The core curves of the solid tori form a Hopf link in $S^3$ if $Y=S^3$, and some 2-component link if $Y=L(p,q)$. In the case of the Hopf link in $S^3$, drill the Hopf link from $S^3-K$. The complement of the Hopf link in $S^3$ is the manifold manifold $T^2\times \RR$. The surface $F$ is embedded as $T^2\times \{0\}$, and the knot $K$ lies on $F$ as a projection surface. Similarly, if $Y$ is a lens space, then drilling the two core curves of the solid tori of $Y$ gives the manifold $T^2\times \RR$ with the surface $F$ embedded as $T^2\times\{0\}$, with the knot $K$ on $F$. 

Let $X$ be $T^2\times \RR$ with (open) horoball neighbourhoods of the cusps at $\pm \infty$ removed; thus $X$ is homeomorphic to $T^2\times[-1,1]$. The knot $K$ becomes a WGA knot on $T^2\times\{0\}$ inside $X$. 

Now apply \refthm{SxIUpperBound}. That theorem implies that $\vol(X-K) < 10\,\vtet\cdot t_F(\pi(K)).$

Finally, the complement of the WGA knot $Y-K$ is obtained from $X-K$ by Dehn filling along the 2-component link at the cores of the solid tori. Since volume strictly decreases under Dehn filling~\cite{thurston:notes}, $\vol(Y-K) < 10\,\vtet\, t_F(\pi(K))$.
\end{proof}

\begin{remark}
The proof of the upper bound of \refcor{TorAltUpperBound} uses restrictions on representativity only to guarantee the link complement is hyperbolic, using \refthm{HP}. In \cite{Adams:ToroidallyAlt}, Adams finds hyperbolicity conditions for alternating links on Heegaard tori in $S^3$ and in lens spaces, which he calls toroidally alternating. Our upper volume bound will hold for toroidally alternating links that are hyperbolic, with no change to the proof. 
\end{remark}

\begin{remark}
Dasbach and Lin \cite{DL} proved that the twist number of a reduced, twist reduced alternating link diagram on $S^2$ is an invariant of the link, by relating the twist number to coefficients of the Jones polynomial.
In \cite{BaKa}, Bavier and Kalfagianni showed that the same is true for the links of \refthm{SxIUpperBound} under the additional hypothesis that the edge-representativity of $\pi(K)$ is at least four. 
\end{remark}

\section{Further remarks}
In this section we collect a few remarks, questions, and observations related to the main results. 

\subsection{A proof via generalised augmented links}

Theorem~\ref{Thm:MainVolume} was proved by finding very straightforward lower bounds on volume using the number of cusps. In fact, better lower bounds on volume can be obtained for the knots $\{K_n\}$ with a 3-holed sphere region in the diagram. This is done by producing a generalised fully augmented link from the knot $K_n$. That is, in addition to adding components $C_1, \dots, C_{2m}$, add components $u,v,w$ from the construction as in \reffig{RedCurves}, and for each twist region on $F$, add a crossing circle encircling that twist region, bounding a 3-punctured sphere. Generalised fully augmented links were considered in \cite{PurcellSlopeLengths, Purcell:multiplytwisted, PurcellSeifert}. 

Similarly to the proof of \refprop{AugmentedHyperbolic}, it can be shown that the fully augmented link is hyperbolic by ruling out essential spheres, discs, tori, and annuli; the arguments follow fairly similarly to those of \cite{PurcellSeifert}. Assuming hyperbolicity, \cite[Theorem~4.2]{Purcell:multiplytwisted} implies the volume satisfies
\[ \vol(K) > 0.64756 \cdot (75 + 2m-1). \]
Here $m$ is the integer such that $K$ is obtained from $K_T$ by twisting at least seven times along curves $C_1, \dots, C_{2m}$. The constant $0.64756$ can be improved as well as the amount of twisting increases, using a
a theorem of  Futer Kalfagianni and Purcell~\cite{fkp:filling}. Indeed, this was our first approach to \refthm{MainVolume}.

\subsection{Lower bounds}
The lower volume bounds from Howie and Purcell \cite{HowiePurcell} on volumes of WGA knots in terms of twist regions on $F$ only apply to WGA knots with all complementary regions in $F-\pi(K)$ homeomorphic to discs. This does not apply to the knots $\{K_n\}$ with a 3-holed sphere region. It leads to the following natural question. 

\begin{question}
  Suppose $K$ admits a weakly generalised alternating projection $\pi(K)$ onto a generalised projection surface $F$ in $S^3$, or more generally in any compact orientable 3-manifold $Y$. Suppose the representativity is at least six, but at least one complementary region of $\pi(K)$ on $F$ is not a disc. Suppose also that $K$ is hyperbolic. Is there a lower bound on the volume of the complement of $K$ in terms of the number of twist regions of $K$ on $F$?
\end{question}

\subsection{Turaev surfaces and adequate knots}
In closing, we mention that all knots in $S^3$ admit alternating checkerboard projections on certain Heegaard surfaces called \emph{Turaev surfaces}; see \cite{DFKLS}.
In this setting an additional diagrammatic condition, called adequacy, also produces knots whose geometry shares many common properties with those of the usual alternating knots, such as volume bounds~\cite{fkp:guts}, topology and geometry of checkerboard surfaces~\cite{Ozawa, fkp:TAMS}, and hyperbolicity~\cite{fkp:CAG}. However, the techniques of WGA links typically do not apply, because such diagrams have low representativity.

\bibliographystyle{amsplain}
\bibliography{biblio.bib}
\end{document}